\newcommand{\beq}{\begin{equation}}
\newcommand{\eeq}{\end{equation}}
\newcommand{\bea}{\begin{eqnarray}}
\newcommand{\eea}{\end{eqnarray}}
\newcommand{\beas}{\begin{eqnarray*}}
\newcommand{\eeas}{\end{eqnarray*}}
\newtheorem{theorem}{Theorem}[section]
\newtheorem{assumption}[theorem]{Assumption}
\newtheorem{definition}[theorem]{Definition}
\newtheorem{proposition}[theorem]{Proposition}
\newtheorem{corollary}[theorem]{Corollary}
\newtheorem{lemma}[theorem]{Lemma}
\newtheorem{remark}[theorem]{Remark}
\newtheorem{example}[theorem]{Example}
\newtheorem{examples}[theorem]{Examples}
\newtheorem{foo}[theorem]{Remarks}
\newenvironment{proof}{\addvspace{\medskipamount}\par\noindent{\it
Proof}.}
{\unskip\nobreak\hfill$\Box$\par\addvspace{\medskipamount}}
\newcommand{\R}{\mathbb R}
\newcommand{\ga}{\gamma}
\title{Smoothing effect of rough differential equations driven by fractional Brownian motions}
\author{Fabrice Baudoin\footnote{Department of Mathematics, Purdue University, 
West Lafayette, IN, USA}, Cheng Ouyang\footnote{Department of Math, Statistics and Computer Science, University of Illinois at Chicago, IL, USA}, Xuejing Zhang\footnote{Department of Mathematics, Purdue University, 
West Lafayette, IN, USA}}
\begin{document}

\maketitle

\begin{abstract}
In this work we study the smoothing effect of rough differential equations driven by a  fractional Brownian motion with parameter $H>1/4$. The regularization estimates we obtain generalize to the fractional Brownian motion previous results by Kusuoka and Stroock and can be seen as a quantitative version of the existence of smooth densities under H\"ormander's type conditions.
\end{abstract}

\baselineskip 0.25in

\tableofcontents

\newpage

\section{Introduction}

In this paper, we study stochastic differential equations driven by a fractional Brownian motion with Hurst parameter $H \in (1/4, 1)$.  More precisely, let us consider the equation
\begin{equation}
\label{eq:intro} X^{x}_t =x + \sum_{i=1}^d \int_0^t V_i (X^{x}_s) dB^i_s,
\end{equation}
where the vector fields $V_1,\ldots,V_d$ are $C^\infty$-bounded vector fields on $\R^n$ and where  $B$ is a $\mathbb{R}^d$-valued centered Gaussian process with covariance 
\[
\mathbb{E}(B_s \otimes B_t) =\frac{1}{2} \left( t^{2H}+s^{2H} -| t-s |^{2H}\right).
\]
The parameter $H$ is the so-called Hurst parameter of the fractional Brownian motion. It quantifies the sample path regularity of $B$ since a straightforward application of the Kolmogorov continuity theorem implies that the paths of $B$ are almost surely locally H\"older of index $H-\varepsilon$ for $0 < \varepsilon <H$. When $H=1/2$, $B$ is a Brownian motion. Fractional Brownian and equations driven by it appear as a natural model in biology and  physics (see for instance \cite{KS,SW, TBV}).

If $H>1/2$, then the paths of $B$ are regular enough and the equation \eqref{eq:intro} is understood in the sense of Young. Existence and uniqueness of solutions are well-known in that case (see \cite{NR,Za}). When $1/4<H\le 1/2$, it can be shown (see \cite{CQ}) that $B$ can canonically be lifted as a geometric $p$-rough path with $p >1/H$. As a consequence, rough paths theory (see \cite{FV-bk,LQ}) can be used to give a sense to what is solution of equation  \eqref{eq:intro}. In the case $H=1/2$, this notion of solution coincides with the solution of the corresponding Stratonovitch stochastic differential equation.

\

In the past few years, the study of the regularity of the law of $ X^{x}_t $ has generated great amount of work. In \cite{BH}, the authors prove, in the regular case $H>1/2$, that if the vector fields $V_1,\cdots,V_d$ satisfy the classical H\"ormander's bracket generating condition, then for $t >0$, the random variable $X_t^x$ admits a smooth density with respect to the Lebesgue measure. In \cite{CF}, the authors prove, in the case $H>1/4$, and under the same assumption on the vector fields, the existence of the density. The smoothness of this density is proved in \cite{HP} for $H>1/3$, conditioned on the integrality of the Jacobian of such systems which is established in \cite{CLL}.  Finally,  smoothness of the density function in the case $H>1/4$ is proved  in \cite{CHLT}.

\

The regularity of the law of $X_t^x$ is intimately related to the regularization properties of the operator:
\[
P_tf(x)=\mathbb{E}(f(X_t^x)),
\]
that is defined for a Borel and bounded function $f$. It should be denoted that when $H \neq 1/2$, $(P_t)_{t \ge 0}$ is not a semigroup and that there is no direct connection with the theory of partial differential equations unless the vector fields $V_1,\cdots,V_d$ commute (see \cite{BC} for further discussion on that aspect). By regularization property of $P_t$, we mean that $P_t$ has "smoothing" effect on the initial datum $f$: If $f$ is a Borel and bounded function $f$, then $P_t f$ is a smooth function for every $t>0$. In the Brownian motion case, that is if $H=1/2$, the regularization property of $P_t$ has been extensively studied and explicitly quantified by Kusuoka and Stroock \cite{KS1,KS2,KS3} and Kusuoka \cite{Ku}. In particular, in his work \cite{Ku}, Kusuoka introduces the  UFG condition on the vector fields (this is our Assumption  \ref{UH condition}) and proves that if this condition is satisfied, then the following theorem holds:

\begin{theorem}[Brownian motion case, Kusuoka \cite{Ku}]
 Let $x \in \mathbb{R}^n$. For any integer $k\ge 1$ and $0 \le i_1 , \cdots , i_k \le d$, there exists a constant $C>0$ (depending on $x$) such that for every $C^\infty$ bounded function $f$ and $t \in (0,1]$,
 \[
  \\|V_{i_1}\cdots V_{i_k }P_{t}f(x)\\|\le Ct^{-k/2} \| f \|_\infty.
 \]
\end{theorem}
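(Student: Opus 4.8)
The plan is to prove the bound by Malliavin calculus and integration by parts, following Bismut and Kusuoka--Stroock. Since equation \eqref{eq:intro} carries no drift, $V_0\equiv0$, so the indices $i_j=0$ yield only trivial identities and we may take $i_1,\dots,i_k\in\{1,\dots,d\}$. The goal is to produce, for each word $(i_1,\dots,i_k)$, a Malliavin-smooth Wiener functional $\Phi_t$ with
\[
V_{i_1}\cdots V_{i_k}P_tf(x)=\E{f(X^x_t)\,\Phi_t},\qquad t\in(0,1],
\]
together with the estimate $\E{\abs{\Phi_t}}\le Ct^{-k/2}$; the theorem then follows from $\abs{\E{f(X^x_t)\Phi_t}}\le\|f\|_\infty\,\E{\abs{\Phi_t}}$, and the $x$-dependence of $C$ is inherited from the local uniformity in $x$ of all the estimates below. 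The powers of $t$ are organized by the Kusuoka--Stroock classes: for $r\in\R$, call $(\Phi_t)_{t\in(0,1]}$ a $\mathcal K_r$-family if $\sup_{t\le1}t^{-r/2}\|\Phi_t\|_{\mathbb D^{j,p}}<\infty$ for all $j\ge0$, $p\ge1$, locally uniformly in $x$; these satisfy $\mathcal K_r\cdot\mathcal K_s\subseteq\mathcal K_{r+s}$, are stable under $C^\infty_b$-functions of a $\mathcal K_0$-family, and $\int_0^t(\cdot)\,ds$, resp. Skorohod integration over $[0,t]$, maps $\mathcal K_r$ into $\mathcal K_{r+2}$, resp. $\mathcal K_{r+1}$.

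I would first collect the flow estimates. The Jacobian $J_t=\partial_xX^x_t$ and its inverse lie in $\mathcal K_0$ (with $J_t-\mathrm{Id}\in\mathcal K_1$), the higher spatial variations of $X^x_t$ are again in $\mathcal K_0$, and $D^i_sX^x_t=J_tJ^{-1}_sV_i(X^x_s)$ for $s\le t$, whence $X^x_t-x\in\mathcal K_1$. A second, essential, identity is Kunita's formula for the pushforward of a vector field by the stochastic flow: for any iterated bracket $V_{[w]}$,
\[
J_t^{-1}V_{[w]}(X^x_t)=V_{[w]}(x)+\sum_{i=1}^d\int_0^tJ^{-1}_sV_{[w\ast i]}(X^x_s)\circ dB^i_s,
\]
so that expanding to bracket length $<m$ leaves a remainder made of brackets of length $\ge m$ multiplied, for each extra level, by one stochastic integration over $[0,t]$, hence by a factor in $\mathcal K_1$ per level.

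The heart of the matter is a non-degeneracy estimate for the reduced Malliavin covariance, and this is where Assumption \ref{UH condition} enters. Writing the Malliavin covariance of $X^x_t$ as $C_t=J_tM_tJ_t^\top$ with $M_t=\sum_{i=1}^d\int_0^t\bigl(J^{-1}_sV_i(X^x_s)\bigr)^{\otimes2}\,ds$, the UFG hypothesis says the $C^\infty_b$-module generated by all brackets $V_{[w]}$ is already generated by those of length $\le\ell$ for some fixed $\ell$; in particular the "reachable" directions lie in $\V(x)=\mathrm{span}\{V_{[w]}(x):1\le|w|\le\ell\}$ (and $X^x_t$ need not have a density). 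The key lemma is a graded non-degeneracy of $M_t$ restricted to $\V$: expanding $s\mapsto J^{-1}_sV_i(X^x_s)$ in iterated brackets and running a quantitative Norris-type lemma — smallness of a semimartingale together with its quadratic variation forces, with a polynomial rate, smallness of its coefficients — through the filtration $\V_1(x)\subseteq\cdots\subseteq\V_\ell(x)=\V(x)$ level by level, one shows that a suitable graded rescaling of $(M_t|_{\V})^{-1}$ has moments of all orders bounded uniformly as $t\downarrow0$; equivalently, differentiating $P_tf$ once along a bracket of length $\ell'$ costs exactly a factor $t^{-\ell'/2}$. This is the step I expect to be the main obstacle: the bound obtained from the plain H\"ormander condition is only of the crude form $t^{-N}$ per derivative, and extracting the sharp exponent requires the level-by-level analysis together with the precise UFG relations.

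Granting the key lemma, the conclusion is an assembly. Since each $V_{i_j}$ is first order, $V_{i_1}\cdots V_{i_k}P_tf(x)$ is a $C^\infty_b$-combination of spatial derivatives $\partial^\beta_xP_tf(x)$ with $|\beta|\le k$, and each of these is an expectation of derivatives of $f$ at $X^x_t$ paired with polynomials in $J_t$ and its spatial variations (all in $\mathcal K_0$). Using Kunita's expansion one rewrites every such pairing as a sum of directional derivatives of $f$ along the brackets $V_{[w]}(X^x_t)\in\V(X^x_t)$, the higher brackets carrying the compensating extra powers of $\sqrt t$ noted above; this reduces the problem to integrating by parts along $\V(X^x_t)$ only. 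Each integration by parts removes one derivative of $f$ at the price of a Skorohod integral whose integrand involves $(M_t|_{\V})^{-1}$ and $DX^x_t$, which by the key lemma and the $\mathcal K_r$-calculus costs $t^{-\ell'/2}$ when that derivative is taken along a length-$\ell'$ bracket — exactly cancelling, for $\ell'\ge2$, the extra $t^{(\ell'-1)/2}$ gained in Kunita's expansion, and contributing $t^{-1/2}$ per length-one bracket. Summing the exponents over the $k$ derivatives gives $\Phi_t\in\mathcal K_{-k}$, hence $\E{\abs{\Phi_t}}\le Ct^{-k/2}$, which is the claimed estimate.
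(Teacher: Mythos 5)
Your proposal is correct in outline, but it reproduces the classical Kusuoka--Stroock/Kusuoka argument rather than the route this paper takes: here the Brownian case is obtained as the $H=1/2$ specialization of Theorem \ref{main}, and the proof is organized differently at exactly the two places you flag. First, instead of a small-time calculus of Kusuoka--Stroock classes $\mathcal{K}_r$, the paper extracts the power $t^{-k/2}$ by a scaling argument: it sets $\epsilon=t$, replaces $V_i$ by $V_i^{\epsilon}=\epsilon^{H}V_i$, uses $X^x_t\stackrel{d}{=}X^{\epsilon,x}_1$, and then only needs integration-by-parts estimates at time $1$ that are uniform in $\epsilon\in(0,1]$; the exponent falls out of $V_{[I]}=\epsilon^{-|I|H}V^{\epsilon}_{[I]}$ rather than from tracking the $t$-order of each Wiener functional. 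Second, instead of the reduced covariance $M_t|_{\mathcal{V}}$ restricted to the $x$-dependent subspace $\mathcal{V}(x)$ (whose uniform invertibility is awkward to formulate), the paper works with the word-indexed Gram matrix $M^{\epsilon}_{I,J}=\langle\beta^{I,\epsilon},\beta^{J,\epsilon}\rangle_{\mathcal{H}}$ of the coefficient processes defined by $(J^{\epsilon}_{0\to t})^{-1}V^{\epsilon}_{[I]}(X^{\epsilon,x}_t)=\sum_{J}\beta^{J,\epsilon}_{I}(t,x)V^{\epsilon}_{[J]}(x)$; Theorem \ref{M inverse bound}, giving $\sup_{\epsilon,x}\mathbb{E}\left(\|M^{-1}\|^{p}\right)<\infty$, replaces your graded non-degeneracy lemma and dispenses with the level-by-level bookkeeping and the cancellation argument between Kunita-expansion gains and covariance losses. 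Your martingale/quadratic-variation Norris lemma is perfectly adequate for $H=1/2$, but the paper deliberately substitutes interpolation-inequality versions together with small-ball estimates for the iterated integrals $B^{I}_{t}$, precisely so that the same proof covers all $H\in(1/4,1)$. Both routes are valid; yours recovers the sharp exponent by hand through the $\mathcal{K}_r$ grading, while the paper's rescaling makes the exponent automatic and the whole argument uniform in the Hurst parameter.
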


The main purpose of the present paper is to generalize this statement to any $H\in (1/4,1)$. More precisely, we prove the following theorem:

\begin{theorem}[Fractional Brownian motion case]\label{main}
Assume $H \in (1/4,1)$ and that the vector fields $V_1,\cdots, V_d$ satisfy the Kusuoka's condition UFG (see Assumption  \ref{UH condition}).  Let $x \in \mathbb{R}^n$. For any integer $k\ge 1$ and $0 \le i_1 , \cdots , i_k \le d$, there exists a constant $C>0$ (depending on $x$) such that for every $C^\infty$ bounded function $f$ and $t \in (0,1]$,
 \[
  \\|V_{i_1}\cdots V_{i_k }P_{t}f(x)\\|\le Ct^{-Hk} \| f \|_\infty.
 \]
\end{theorem}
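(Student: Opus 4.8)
The plan is to exhibit, for each $t\in(0,1]$, a random variable $\Phi_{i_1\cdots i_k}(t)\in\bigcap_{p\ge 1}L^p(\Omega)$ with
\[
V_{i_1}\cdots V_{i_k}P_tf(x)=\mathbb E\!\left[f(X_t^x)\,\Phi_{i_1\cdots i_k}(t)\right]\qquad\text{for every }C^\infty\text{ bounded }f,
\]
together with $\|\Phi_{i_1\cdots i_k}(t)\|_{L^2(\Omega)}\le C\,t^{-Hk}$; since $|\mathbb E[f(X_t^x)\Phi]|\le\|f\|_\infty\|\Phi\|_{L^1}$ this gives the theorem. As $(P_t)$ is not a semigroup the operator $V_{i_1}\cdots V_{i_k}$ must be handled without splitting $P_t$; the construction of $\Phi$ will be a $k$-fold Malliavin integration by parts on the fractional Wiener space, and the point is that each of the $k$ steps can be carried out at the exact cost $t^{-H}$ even though only the degenerate directions permitted by Assumption~\ref{UH condition} are available. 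I work throughout in the Malliavin calculus attached to $B$ (built from the rough-path lift when $H\le 1/2$), where $\delta$ is continuous between the relevant Sobolev spaces and Meyer-type inequalities hold.

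First I would set up the algebraic backbone that the UFG condition provides. Write $J_t=\partial_xX_t^x$, $J_t^{-1}$ for its inverse and, for a word $\alpha$, $V_{[\alpha]}$ for the iterated Lie bracket. For any $C^\infty_b$ vector field $W$ one has $d\!\left(J_t^{-1}W(X_t^x)\right)=\sum_{i=1}^dJ_t^{-1}[V_i,W](X_t^x)\,dB^i_t$; applied to the $V_{[\alpha]}$ with $1\le|\alpha|\le m$ ($m$ the integer of Assumption~\ref{UH condition}), and re-expanding each $[V_i,V_{[\alpha]}]$ in the $V_{[\beta]}$, $|\beta|\le m$, this shows that $\Phi^\alpha_t:=J_t^{-1}V_{[\alpha]}(X_t^x)$ solves a \emph{closed} linear RDE with $C^\infty_b$ coefficients. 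Combined with the Cass--Litterer--Lyons integrability estimates for Gaussian RDEs this yields, for every $p$: finiteness of $\mathbb E[\sup_{s\le1}\|\Phi^\alpha_s\|^p]$, the short-time bound $\big\|\sup_{s\le t}\|\Phi^\alpha_s-V_{[\alpha]}(x)\|\big\|_{L^p}\le C_pt^{H}$, and the same for all $x$-derivatives of $X^x_t$, $J_t$, $J_t^{-1}$. The qualitative outcome is that the (possibly singular) distribution $\mathcal V_m(x)=\mathrm{span}\{V_{[\alpha]}(x):1\le|\alpha|\le m\}$ is stable under the whole linearized flow, so that with $\mathcal M_t:=\sum_{i=1}^d\langle\mathbf 1_{[0,t]}\Phi^{(i)}_\cdot,\mathbf 1_{[0,t]}\Phi^{(i)}_\cdot\rangle_{\mathcal H}$ ($(i)$ the one-letter word), the Malliavin covariance $\gamma_t=\langle DX_t^x,DX_t^x\rangle_{\mathcal H}=J_t\mathcal M_tJ_t^{\mathrm T}$ has range exactly $J_t\mathcal V_m(x)$.

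The analytic core is a graded non-degeneracy estimate: for every $p$ there should be $C_p$ with, for all $t\in(0,1]$ and all $\alpha$, $1\le|\alpha|\le m$,
\[
\mathbb E\!\left[\big\langle\mathcal M_t^{+}V_{[\alpha]}(x),V_{[\alpha]}(x)\big\rangle^{\,p}\right]\le C_p\,t^{-2H|\alpha|\,p},
\]
$\mathcal M_t^{+}$ the Moore--Penrose inverse; equivalently $\langle\mathcal M_tw,w\rangle\ge c_\omega\,t^{2H|\alpha|}\langle V_{[\alpha]}(x),w\rangle^2$ for all $w$, with $c_\omega^{-1}$ of uniformly bounded moments. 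I would prove this by expanding $r\mapsto\langle\Phi^{(i)}_r,w\rangle$ in a rough Taylor expansion whose coefficients encode the numbers $\langle V_{[\beta]}(x),w\rangle$, and then invoking a Norris-type lemma --- the deterministic rough-path version of Cass--Hairer--Litterer--Tindel when $H\le 1/2$, a Young-regime argument when $H>1/2$ --- together with $\|\mathbf 1_{[0,t]}\|_{\mathcal H}^2=t^{2H}$ and the Gaussian small-ball/tail bounds, to exclude, off an event of uniformly controlled small probability, that $\langle\mathcal M_tw,w\rangle$ be much smaller than $\sum_{|\beta|\le m}t^{2H|\beta|}\langle V_{[\beta]}(x),w\rangle^2$. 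I expect this to be the main obstacle: getting the exponents $2H|\beta|$ sharp and uniform in $t$, and coping with the non-constant rank of $\mathcal V_m$ (which is exactly where the full strength of UFG enters), makes the fractional case significantly harder than the Brownian one.

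Granting these two inputs, the conclusion follows from one ``one-step'' identity iterated $k$ times, in the spirit of Kusuoka--Stroock function classes. Call $Z=Z(t,x)$ \emph{admissible of order} $t^{-\nu}$ if $Z$ together with all its $x$- and Malliavin derivatives lie in $\bigcap_pL^p$ with $L^p$-norms $O(t^{-\nu})$. Since $J_tV_i(x)\in J_t\mathcal V_m(x)=\mathrm{Range}\,\gamma_t$, the equation $\gamma_tu=J_tV_i(x)$ is solved by $u=\gamma_t^{+}J_tV_i(x)$, for which $\|DX_t^x\!\cdot u\|_{\mathcal H}^2=\langle\mathcal M_t^{+}V_i(x),V_i(x)\rangle\le c_\omega^{-1}t^{-2H}$ by the graded estimate; the chain rule and the definition of $\delta$ then give the exact identity
\[
V_i\!\left(\mathbb E[g(X_t^x)Z]\right)=\mathbb E\!\left[g(X_t^x)\Big(\delta\big((DX_t^x\!\cdot u)\,Z\big)+\langle\nabla_xZ,V_i(x)\rangle\Big)\right],
\]
and a bookkeeping using the product formula for $\delta$, the graded estimate, and the stability of $\mathcal V_m$ shows that the first new weight is admissible of order $t^{-\nu-H}$ and the second of order $t^{-\nu}$. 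Starting from $Z=1$ (order $t^0$) and applying this identity for $i=i_k,i_{k-1},\dots,i_1$ produces a finite sum of terms $\mathbb E[f(X_t^x)Z']$ with $Z'$ admissible of order at least $t^{-Hk}$ --- the worst order $t^{-Hk}$ being reached by the branch that integrates by parts at every step, every ``$\nabla_x$'' branch being of strictly lower order. Summing the $Z'$ gives $\Phi_{i_1\cdots i_k}(t)$ with $\|\Phi_{i_1\cdots i_k}(t)\|_{L^2}\le Ct^{-Hk}$, which is the theorem; for $H=1/2$ this is exactly Kusuoka's argument and $t^{-Hk}=t^{-k/2}$.
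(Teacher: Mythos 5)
Your overall architecture --- a reduced Malliavin matrix built from $J_t^{-1}V_{[\alpha]}(X_t^x)$ (which the UFG condition closes into a linear system), quantitative non-degeneracy via Norris-type lemmas and small-ball bounds, and a $k$-fold integration by parts with Kusuoka--Stroock-style bookkeeping --- is the right one and matches the paper's. But the proposal has a genuine gap at its center: the graded estimate $\mathbb{E}\left[\langle\mathcal{M}_t^{+}V_{[\alpha]}(x),V_{[\alpha]}(x)\rangle^p\right]\le C_p\,t^{-2H|\alpha|p}$, uniformly in $t\in(0,1]$, is asserted (you yourself flag it as ``the main obstacle'') but not proved, and the route you sketch --- running a Norris lemma directly on $[0,t]$ and extracting the sharp exponents $2H|\beta|$ --- would require a small-time version of the Norris lemma in which the dependence of every constant on the horizon $t$ is made explicit. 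In the Brownian case Kusuoka and Stroock obtained such small-time control by martingale and Markov methods, which are unavailable for fBm; the interpolation-based Norris lemmas of \cite{BH} and \cite{CHLT} that you invoke are stated on a fixed interval and do not by themselves deliver the $t$-dependence. The same issue recurs in your final bookkeeping: the claims that the ``$\nabla_x$'' branch costs no extra power of $t$ and that all Malliavin--Sobolev norms of the weights are $O(t^{-\nu})$ --- including derivatives of the Moore--Penrose inverse $\gamma_t^{+}$, which is differentiable only on the constant-rank stratum and needs separate justification under UFG, where the covariance may be genuinely degenerate --- are precisely the uniform small-time estimates being assumed rather than established.

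The paper's way around all of this is a single device you do not use: the self-similarity of fractional Brownian motion. Setting $\epsilon=t$ and $V_i^{\epsilon}=\epsilon^{H}V_i$, one has that $X_t^x$ equals $X_1^{\epsilon,x}$ in law, so
$V_{[I_1]}\cdots V_{[I_k]}P_tf(x)=\epsilon^{-(|I_1|+\cdots+|I_k|)H}\,V^{\epsilon}_{[I_1]}\cdots V^{\epsilon}_{[I_k]}\mathbb{E}\left(f(X_1^{\epsilon,x})\right)$:
the entire power of $t$ is produced algebraically by rescaling the vector fields, and what remains is only to perform the integration by parts at time $1$ with weights bounded in every $\mathbb{D}^{k,p}$ uniformly in $\epsilon\in(0,1]$. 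That reduces the analytic core to a single non-graded statement --- uniform-in-$\epsilon$ inverse moments of the word-indexed Gram matrix $M^{\epsilon}_{I,J}=\langle\beta^{I,\epsilon},\beta^{J,\epsilon}\rangle_{\mathcal{H}}$, which is honestly invertible even when the $n\times n$ covariance is not, so no pseudo-inverse is needed --- and the fixed-interval Norris lemma together with small-ball estimates then suffices. To complete your proof as written you must either supply the small-time quantitative Norris lemma with explicit constants, or (much more simply) insert the rescaling step at the outset.
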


Our result is obviously an extension of Kusuoka's result, since it encompasses the case $H=1/2$. It is interesting to observe that the framework given by the most recent developments in  rough paths theory (see in particular \cite{ CHLT,CLL, HP})  actually simplifies Kusuoka's approach and, in our opinion,  provides an overall simpler and clearer proof of his result which originally built on \cite{KS1,KS2,KS3}.  

We should mention that Theorem \ref{main} was already proved by two of the authors in the regular case $H>1/2$ and under a strong ellipticity assumption on the vector fields, see \cite{BO12}. The rough setting and the more general UFG assumption on the vector fields make the proof of Theorem \ref{main} much more difficult.

The paper is organized as follows. In Section 2, we give the necessary background on Malliavin calculus that will be needed throughout the paper. Section 3 is devoted to the proof of  the main technical estimates that are needed. It is the heart of our contribution. In the Brownian motion case,  similar estimates are obtained in \cite{KS2,KS3,Ku}, but the proof of those heavily relies on  Markov and martingale methods. We prove here that such estimates may be obtained in a more general setting by using quantitative versions of Norris' type lemma (see \cite{BH} and \cite{HP}) which are based on interpolation inequalities and by using small ball probability estimates for fractional Brownian motions (see \cite{LS}). Once these estimates are obtained, after some work the integration by part technique of Kusuoka-Stroock \cite{KS3} and Kusuoka \cite{Ku} can essentially be adapted to the fractional Brownian motion case after suitable changes. Let us however observe that we obtain the correct order in $t$ by using a rescaling 
argument on the vector fields $V_i$'s instead of analyzing the small time behavior of the estimates of Section 2.

\section{Stochastic differential driven by fractional Brownian motions}

In this preliminary section, we present the tools about the stochastic  analysis of fractional Browian motion that are needed for the remainder of the paper. 

\subsection{Fractional Brownian motion}

A fractional Brownian motion $B=(B^1,\cdots,B^d)$ is a $d$-dimensional centered Gaussian process, whose covariance is given by
\begin{equation*}
R\left( t,s\right) :=\mathbb{E}\left(  B_s^j \, B_t^j \right)
=\frac{1}{2}\left( s^{2H}+t^{2H}-|t-s|^{2H}\right),
\quad\mbox{for}\quad 
s,t\in[0,1] \mbox{ and } j=1,\ldots,d.
\end{equation*}
In particular it can be shown, by a standard application of Kolmogorov's criterion, that $B$ admits a continuous version
whose paths are $\ga$-H\"older continuous for any $\ga<H$.

Let $\mathcal{E}$ be the space of $\mathbb{R}^d$-valued step
functions on $[0,1]$, and $\mathcal{H}$  the closure of
$\mathcal{E}$ for the scalar product:
\[
\langle (\mathbf{1}_{[0,t_1]} , \cdots ,
\mathbf{1}_{[0,t_d]}),(\mathbf{1}_{[0,s_1]} , \cdots ,
\mathbf{1}_{[0,s_d]}) \rangle_{\mathcal{H}}=\sum_{i=1}^d
R(t_i,s_i).
\]
When $H>\frac{1}{2}$ it can be shown that $\mathbf{L}^{1/H} ([0,1], \mathbb{R}^d)
\subset \mathcal{H}$, and that for $\phi,\psi \in \mathbf{L}^{1/H} ([0,1], \mathbb{R}^d)$, 
we have
\[
\langle \phi , \psi \rangle_{\mathcal{H}}=H(2H-1)\int_0^1 \int_0^1
\mid s-t \mid^{2H-2} \langle \phi (s) , \psi(t)
\rangle_{\mathbb{R}^d} ds dt.
\]
The following interpolation inequality that was proved in \cite{BH}, will be an essential tool in our analysis. For every  $\gamma >H-\frac{1}{2}$, there exists a constant $C$ such that for every continuous function $f \in \mathcal{H}$,
\begin{align}\label{interpolation H big}
\| f \|_\mathcal{H} \ge C\frac{ \| f \|_{\infty}^{3+\frac{1}{\gamma}}}{\| f \|^{2+\frac{1}{\gamma}}_\gamma},
\end{align}
where 
\[
\| f \|_\gamma= \sup_{0 \le s < t \le 1} \frac{\| f(t)-f(s) \|}{|t-s|^\gamma} +\| f \|_{\infty},
\]
is the usual H\"older norm.

When $\frac{1}{4}<H<\frac{1}{2}$ one has 
$$ \mathcal{H}\subset \mathbf{L}^2([0,1])$$
 and the following interpolation inequality classically holds for every $f \in \mathcal{H}$,
\[
\| f \|_\mathcal{H} \ge C \| f \|_{L^2}.
\]
Let us also mention the following inequality that is useful to bound from below the $L^2$ norm by the supremum norm and the H\"older norm
 \[
\|f\|_{\infty}\le 2\max\left\{ \|f\|_{L^{2}}, \|f\|^{\frac{2\gamma}{2\gamma+1}}_{L^{2}}\|f\|^{\frac{1}{2\gamma+1}}_{\gamma}\right\} .
\]
Such inequality was already used in connection with the space $\mathcal{H}$ in \cite{HP}.

\subsection{Malliavin calculus}

Let us remind the basic framework of Malliavin calculus (see \cite{Nu06} for further details).
A  real valued random variable $F$ is then said to be cylindrical if it can be
written, for a given $n\ge 1$, as
\begin{equation*}
F=
f \left(\int_0^{1} \langle h^1_s, dB_s \rangle ,\ldots,\int_0^{1}
\langle h^n_s, dB_s \rangle \right)\;,
\end{equation*}
where $h^i \in \mathcal{H}$ and $f:\mathbb{R}^n \rightarrow
\mathbb{R}$ is a $C^{\infty}$-bounded function. The set of
cylindrical random variables is denoted $\mathcal{S}$. 

The Malliavin derivative is defined as follows: for $F \in \mathcal{S}$, the derivative of $F$ is the $\mathbb{R}^d$ valued
stochastic process $(\mathbf{D}_t F )_{0 \leq t \leq 1}$ given by
\[
\mathbf{D}_t F=\sum_{i=1}^{n} h^i (t) \frac{\partial f}{\partial
x_i} \left(\int_0^{1} \langle h^1_s, dB_s \rangle ,\ldots,\int_0^{1}
\langle h^n_s, dB_s \rangle \right).
\]
More generally, we can introduce iterated derivatives. If $F \in
\mathcal{S}$, we set
\[
\mathbf{D}^k_{t_1,\ldots,t_k} F = \mathbf{D}_{t_1}
\ldots\mathbf{D}_{t_k} F.
\]
For any $p \geq 1$, it can be checked that the operator $\mathbf{D}^k$ is closable from
$\mathcal{S}$ into $\mathbf{L}^p(\Omega)$. We will denote by
$\mathbb{D}^{k,p}$ the domain of this closure, that is closure of the class of
cylindrical random variables with respect to the norm
\[
\left\| F\right\| _{k,p}=\left( \mathbb{E}\left( |F|^{p}\right)
+\sum_{j=1}^k \mathbb{E}\left( \left\| \mathbf{D}^j F\right\|
_{\mathcal{H}^{\otimes j}}^{p}\right) \right) ^{\frac{1}{p}},
\]
and
\[
\mathbb{D}^{\infty}=\bigcap_{p \geq 1} \bigcap_{k
\geq 1} \mathbb{D}^{k,p}.
\]

For $p>1$ we can consider the  divergence operator $\delta$  which is defined as the adjoint of $\mathbf{D}$ defined on $\mathbf{L}^p(\Omega)$. It is characterized by the duality formula:
\[
\mathbb{E} ( F \delta u) =\mathbb{E} \left( \langle \mathbf{D} F, u \rangle_\mathcal{H} \right), \quad F \in \mathbb{D}^{1,p}.
\]
It is proved in \cite{Nu06}, Proposition 1.5.7 that $\delta$ is continuous from $\mathbb{D}^{1,p}$ into $\mathbf{L}^p(\Omega)$.

\subsection{Stochastic differential equations driven by fractional Brownian motions}

In this paper,  we will consider the following kind of equation:
\begin{equation}
\label{eq:sde} X^{x}_t =x +
\sum_{i=1}^d \int_0^t V_i (X^{x}_s) dB^i_s,
\end{equation}
where the vector fields $V_1,\ldots,V_d$ are $C^\infty$ bounded vector fields on $\R^n$ and where  $B$ is a fractional Brownian motion with parameter $H \in ( 1/4 , 1)$.

If $H > 1/2$. The equation \eqref{eq:sde} is understood in Young's sense, but if $H \in (1/3, 1/2]$, we need to understand the equation in the sense of rough paths theory (see e.g.  \cite{CQ, FV-bk}). In both cases, the $C^\infty$ boundedness of the vector fields is more than enough to ensure the existence and uniqueness of solutions.

Once equation (\ref{eq:sde}) is solved, the vector $X_t^x$ is a typical example of  random variable which can be differentiated in the sense of  Malliavin.  It is classical that one can  express this Malliavin derivative in terms of the first variation process $J $ of the equation, which is defined by the relation $J_{0\to t}^{ij}=\partial_{x_j}X_t^{x,i}$. Setting $\partial V_{j}$ for the Jacobian of $V_{j}$ seen as a function from $\R^{n}$ to $\R^{n}$,  it is well known that $J$ is the unique solution to the linear equation
\begin{equation}\label{eq:jacobian}
J_{0\to t} = I + 
\sum_{j=1}^d \int_0^t \partial V_j (X^{x}_s) \, J_{0 \to s} \, dB^j_s,
\end{equation}
and that the following results hold true (see \cite{CF} and \cite{NS}  for further details):
\begin{proposition}\label{prop:deriv-sde}
Let $X^x$ be the solution to equation (\ref{eq:sde}). Then
for every $i=1,\ldots,n$ and $t>0$, and $x \in \mathbb{R}^n$, we have $X_t^{x,i} \in
\mathbb{D}^{\infty}$ and
\begin{equation*}
\mathbf{D}^j_s X_t^{x}=J_{s\to t} V_j (X_s) , \quad j=1,\ldots,d, \quad 
0\leq s \leq t,
\end{equation*}
where $\mathbf{D}^j_s X^{x,i}_t $ is the $j$-th component of
$\mathbf{D}_s X^{x,i}_t$, $J_{0 \to t}=\partial_{x} X^x_t$ and $J_{s\to t}=J_{0 \to t} J_{0 \to s}^{-1}$. 
\end{proposition}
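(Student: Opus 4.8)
The plan is to establish the statement in two stages: first, that $X_t^x$ lies in $\mathbb{D}^\infty$ componentwise; second, that its Malliavin derivative is given by the stated formula, which will follow by differentiating \eqref{eq:sde} and appealing to uniqueness for linear rough (or Young) differential equations. Throughout, the relevant quantitative input is the integrability of the Jacobian $J_{0\to t}$ and its inverse, established in \cite{CLL}, together with the moment estimates of \cite{CF,CHLT}.

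For the first stage I would run a Picard scheme: set $X^{(0)}_t\equiv x$ and $X^{(m+1)}_t=x+\sum_{i=1}^d\int_0^t V_i(X^{(m)}_s)\,dB^i_s$. Because the $V_i$ are $C^\infty$-bounded, each $X^{(m)}_t$ is a smooth function of finitely many increments of $B$ (in Young's regime) or of the lifted rough path (when $H\le 1/2$), hence belongs to $\mathbb{D}^\infty$. The continuity and local Lipschitz estimates for the It\^o--Lyons map, combined with the integrability results cited above, yield bounds for $\|X^{(m)}_t\|_{k,p}$ that are uniform in $m$ for every $k\ge 1$ and $p\ge 1$. Since $X^{(m)}_t\to X_t^x$ in $L^p(\Omega)$ and $\mathbf{D}^k$ is closed, one concludes $X_t^{x,i}\in\mathbb{D}^{k,p}$ for all $i,k,p$, i.e. $X_t^{x,i}\in\mathbb{D}^\infty$.

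For the second stage, apply $\mathbf{D}^j_s$ to both sides of \eqref{eq:sde}. Using $\mathbf{D}^j_s B^i_u=\delta_{ij}\mathbf{1}_{[0,u]}(s)$ and the chain rule for the (Young or rough) integral, one gets, for $t\ge s$,
\[
\mathbf{D}^j_s X_t^x = V_j(X_s^x) + \sum_{i=1}^d \int_s^t \partial V_i(X_u^x)\,\mathbf{D}^j_s X_u^x\,dB^i_u ,
\]
a linear equation in $u\mapsto \mathbf{D}^j_s X_u^x$ on $[s,t]$ with value $V_j(X_s^x)$ at $u=s$; causality makes the integral start at $s$. On the other hand, since $J_{s\to u}=I+\sum_{i=1}^d\int_s^u \partial V_i(X_r^x)\,J_{s\to r}\,dB^i_r$, the process $u\mapsto J_{s\to u}V_j(X_s^x)$ solves the same linear equation with the same starting value. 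By uniqueness for linear differential equations with linearly growing coefficients, the two processes coincide, which gives $\mathbf{D}^j_s X_t^x=J_{s\to t}V_j(X_s^x)$ for $0\le s\le t$. The analogous formula for $\mathbf{D}^k$ follows by the same argument applied inductively, each $\mathbf{D}^k_{s_1,\dots,s_k}X_t^x$ solving a linear equation whose coefficients involve higher derivatives of the $V_i$, lower-order Malliavin derivatives of $X$, and $J,J^{-1}$.

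I expect the main obstacle to be the rough regime $H\le 1/2$: one cannot differentiate the rough integral term by term naively, and the uniform control of the Malliavin--Sobolev norms $\|X^{(m)}_t\|_{k,p}$ along the Picard approximation requires the deep integrability estimates for the Jacobian and its inverse from \cite{CLL} (and the associated Gaussian concentration bounds). This is precisely why we content ourselves here with the above sketch and refer to \cite{CF} and \cite{NS} for the details.
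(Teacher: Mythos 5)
The paper does not actually prove this proposition: it is stated as a known result with pointers to \cite{CF} and \cite{NS}, so there is no in-text argument to compare against. Your sketch follows the standard route of those references --- Picard iteration plus closability of $\mathbf{D}^k$ to get membership in $\mathbb{D}^{\infty}$, then differentiation of the equation and uniqueness for the resulting linear (Young or rough) equation to identify $\mathbf{D}^j_s X_t^x$ with $J_{s\to t}V_j(X_s^x)$ --- and the second stage is correct as written: both $u\mapsto \mathbf{D}^j_s X_u^x$ and $u\mapsto J_{s\to u}V_j(X_s^x)$ solve the same linear equation on $[s,t]$ with the same datum $V_j(X_s^x)$ at $u=s$, the causality $\mathbf{D}^j_s X_u^x=0$ for $u<s$ being what localizes the integral to $[s,t]$. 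One imprecision worth fixing in the first stage: the Picard iterates $X^{(m)}_t$ are \emph{not} smooth functions of finitely many increments of $B$ (they are iterated integral functionals, not cylindrical random variables); their membership in $\mathbb{D}^{\infty}$ should instead be obtained by induction on $m$, using the fact that the Young or rough integral of a $\mathbb{D}^{\infty}$-valued integrand against $B$ is again in $\mathbb{D}^{\infty}$, after which the uniform $\|\cdot\|_{k,p}$ bounds and the closedness of $\mathbf{D}^k$ give the conclusion exactly as you say. With that repair, and granting the integrability estimates of \cite{CLL} in the regime $H\le 1/2$ (which you correctly identify as the genuinely hard input and defer to the literature, just as the paper does), your argument is sound and consistent with what the paper imports from \cite{CF} and \cite{NS}.
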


\smallskip

We finally mention the recent result \cite{CLL}, which gives a useful estimate for moments of the Jacobian of rough differential equations driven by Gaussian processes.
\begin{proposition}\label{prop:moments-jacobian}
Let $p>1/H$. For any $n \ge 0 $, 
\begin{equation}\label{eq:moments-J-pvar}
\mathbb{E} \left( \| J \|^n_{p-var; [0,1]} \right) < +\infty,
\end{equation}
where $\| \cdot \|_{p-var; [0,1]}$  denotes the $p$-variation norm on the interval $[0,1]$.
\end{proposition}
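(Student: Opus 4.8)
The plan is to follow \cite{CLL} and split the statement into a deterministic growth estimate for the linear rough differential equation \eqref{eq:jacobian} and a probabilistic tail estimate for the greedy partition count of the Gaussian rough path $\mathbf{B}$, the geometric $p$-rough path lift of $B$ (which exists for $H>1/4$ and $p>1/H$ by \cite{CQ}). The starting point is that, by Proposition~\ref{prop:deriv-sde} and \eqref{eq:jacobian}, the pair $(X^x,J)$ solves a rough differential equation driven by $\mathbf{B}$ whose $J$-component is \emph{linear} in $J$; this linearity is what makes the argument go through.

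\textbf{Deterministic step.} Fix $\alpha>0$, depending only on $p$ and the $C^\infty$-bounds of $V_1,\dots,V_d$, small enough that the standard local estimates of rough paths theory (see \cite{FV-bk}) hold on every subinterval $[s,t]\subseteq[0,1]$ with $\|\mathbf{B}\|_{p-var;[s,t]}\le\alpha$; in particular $\|J_{s\to t}\|_{op}\le\mu$ on such intervals for a constant $\mu=\mu(\alpha,p,V)>1$, with an analogous bound for $\|J\|_{p-var;[s,t]}$. Let $N_{\alpha;[0,1]}(\mathbf{B})$ denote the least integer $N$ for which $[0,1]$ admits a partition $0=t_0<\dots<t_N=1$ with $\|\mathbf{B}\|_{p-var;[t_{i-1},t_i]}\le\alpha$ for each $i$. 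Feeding these local bounds into the multiplicative identity $J_{0\to 1}=J_{t_{N-1}\to 1}\cdots J_{0\to t_1}$ and summing block contributions yields the (by now classical, see \cite{FV-bk,CLL}) linear-RDE growth estimate
\[
\|J\|_{p-var;[0,1]}\le C_1\exp\!\big(C_2\,N_{\alpha;[0,1]}(\mathbf{B})\big),
\]
with $C_1,C_2$ depending only on $p$ and the $V_j$'s.

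\textbf{Probabilistic step.} This is the crux, and it is where $H>1/4$ is used. The covariance $R$ of fractional Brownian motion has finite two-dimensional $\rho$-variation with $\rho=\tfrac1{2H}\vee 1<2$, and its Cameron--Martin space $\mathcal{H}$ has complementary Young regularity, embedding continuously into the space of continuous paths of finite $q$-variation for some $q\in[1,2)$ with $\tfrac1q+\tfrac1\rho>1$ (such $q$ can be chosen exactly because $\rho<2$, hence for every $H>1/4$). Plugging these two structural facts into the Borell-type concentration argument of \cite{CLL} produces constants $C_3,C_4>0$ such that
\[
\mathbb{P}\big(N_{\alpha;[0,1]}(\mathbf{B})>n\big)\le C_3\exp\!\big(-C_4\,n^{2/q}\big),\qquad n\ge 1,
\]
and since $q<2$ the exponent $2/q$ exceeds $1$, so $N_{\alpha;[0,1]}(\mathbf{B})$ has finite moments of the form $\mathbb{E}(e^{\lambda N_{\alpha;[0,1]}(\mathbf{B})})$ for every $\lambda>0$.

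\textbf{Conclusion.} Combining the two steps, for every integer $n\ge 1$,
\[
\mathbb{E}\big(\|J\|^n_{p-var;[0,1]}\big)\le C_1^{\,n}\,\mathbb{E}\big(\exp(nC_2\,N_{\alpha;[0,1]}(\mathbf{B}))\big)<+\infty,
\]
and the remaining range $0\le n<1$ follows from Jensen's inequality. The same argument applies verbatim to the inverse Jacobian, which solves a companion linear equation. The main obstacle is the probabilistic step: $N_{\alpha;[0,1]}(\mathbf{B})$ is \emph{not} a Lipschitz functional of the underlying Gaussian path, so its concentration cannot be read off directly from Borell's inequality and instead requires the localisation and Cameron--Martin translation argument of \cite{CLL}, together with the verification that the constraint $\tfrac1q+\tfrac1\rho>1$ with $q<2$ is compatible with the regularity of $\mathcal{H}$ for every $H>1/4$. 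The deterministic step, by contrast, is essentially routine once the linearity of \eqref{eq:jacobian} is exploited.
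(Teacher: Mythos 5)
The paper offers no proof of this proposition: it is quoted directly from \cite{CLL}, so there is no in-house argument to compare against. Your sketch is a faithful outline of the Cass--Litterer--Lyons proof that the paper is invoking: a deterministic exponential bound for the linear RDE \eqref{eq:jacobian} in terms of the greedy partition count $N_{\alpha;[0,1]}(\mathbf{B})$, followed by a Gaussian concentration argument (Borell's inequality combined with Cameron--Martin translation, precisely because $N_\alpha$ is not a Lipschitz functional of the path) yielding Weibull tails of shape $2/q>1$ and hence all exponential moments of $N_\alpha$. Two small corrections to the probabilistic step. First, the complementary Young regularity condition should read $\tfrac1p+\tfrac1q>1$, pairing the $q$-variation of Cameron--Martin paths against the $p$-variation of the rough path, not $\tfrac1q+\tfrac1\rho>1$ as you wrote. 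Second, your parenthetical suggests that $\rho<2$ alone guarantees a suitable $q$; in fact the generic embedding $\mathcal H\hookrightarrow C^{\rho\text{-var}}$ is Young-complementary to $p>2\rho$ only when $\rho<3/2$, i.e.\ $H>1/3$. To cover $1/4<H\le 1/3$ one must use the sharper fBm-specific embedding $\mathcal H\hookrightarrow C^{q\text{-var}}$ for $q>(H+\tfrac12)^{-1}$, for which $\tfrac1p+\tfrac1q>1$ with $p$ close to $1/H$ amounts exactly to $2H+\tfrac12>1$, i.e.\ $H>1/4$; since $(H+\tfrac12)^{-1}<2$ one still gets $2/q>1$ and the conclusion stands. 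With that adjustment your argument is the standard one, and like the paper it appropriately defers the technical concentration lemma to \cite{CLL}.
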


\section{Basic estimates}

Let us consider vector fields $V_1,\cdots, V_d$ on $\mathbb{R}^n$. Let $\mathcal{A}=\{\emptyset\}\cup\bigcup^{\infty}_{k=1}\{1,2,\cdots,d\}^{k}$ and
$\mathcal{A}_{1}=A\setminus\{\emptyset\}$. We say that $I\in \mathcal{A}$ is a word of length $k$ if $I=(i_1,\cdots,i_k)$
and we write $|I|=k$. If $I=\emptyset$, then we denote $|I|=0$. For any integer $l\ge 1$, we denote by $\mathcal{A}(l)$
the set $\{I\in \mathcal{A}; |I|\le l\}$ and by $\mathcal{A}_{1}(l)$ the set $\{I\in \mathcal{A}_{1}; |I|\le l\}$ .
We also define an operation $\ast$ on $\mathcal{A}$ by
$I\ast J=(i_1,\cdots,i_k,j_1,\cdots,j_l)$ for $I=(i_1,\cdots,i_k)$ and $J=(j_1,\cdots, j_l)$ in $\mathcal{A}$.
We define vector fields $V_{[I]}$ inductively by
\[
V_{[j]}=V_{j}, \quad V_{[I\ast j]}=[V_{[I]}, V_{j}], \quad j=1,\cdots,d
\]

Throughout this paper, we will make the following assumptions on the vector fields.
\begin{assumption}\label{UH condition}

\

\begin{enumerate}
 \item  The $V_{i}'s$ are bounded smooth vector fields on $\mathbb{R}^{n}$ with bounded derivatives at any order.\\
 \item There exists an integer $l\ge 1$ and $\omega^{J}_{I}\in C^{\infty}_{b}(\mathbb{R}^{n},\mathbb{R})$ such that for any $x\in \mathbb{R}^{n}$
\begin{align}\label{bracket_omega_UH}
V_{[I]}(x)=\sum_{J\in \mathcal{A}(l)}\omega^{J}_{I}(x)V_{[J]}(x), \quad I\in \mathcal{A}_{1}
\end{align}
\end{enumerate}
\end{assumption}

The second condition was introduced by S. Kusuoka in \cite{Ku}. It holds for a system of vector fields that satisfy a uniform strong H\"omander's bracket generating condition, but observe that in order that Assumption \ref{UH condition} holds,  it is not even  necessary that the bracket generating condition holds.

\
Let us consider the following rescaled differential equations, which depend on the parameter $\epsilon >0$: 
\begin{align}\label{scaled sde}
X^{\epsilon, x}_{t}=&x+\sum^{d}_{i=1}\int^{t}_{0}V^{\epsilon}_{i}(X^{\epsilon, x}_{s})dB^{i}_{s} \notag \\
		   =&x+\sum^{d}_{i=1}\int^{t}_{0}\epsilon^{H}V_{i}(X^{\epsilon, x}_{s})dB^{i}_{s}.
\end{align}
Clearly, the rescaled vector fields $V^{\epsilon}_{i}$ are defined as $V^{\epsilon}_{i}(x)=\epsilon^{H}V_{i}(x)$. More generally, for any 
$I\in \mathcal{A}_{1}(l)$, we denote $V^{\epsilon}_{[I]}(x)=\epsilon^{|I|H}V_{[I]}(x)$. Note that:
\begin{align*}
 V^{\epsilon}_{[I]}(x)=&\epsilon^{|I|H}V_{[I]}(x)\\
                      =&\sum_{J\in \mathcal{A}_{1}(l)}\epsilon^{|I|H}\omega^{J}_{I}(x)V_{[J]}(x)\\
		       =&\sum_{J\in \mathcal{A}_{1}(l)}\epsilon^{(|I|-|J|)H}\omega^{J}_{I}(x)V^{\epsilon}_{[J]}(x)\\
		       =&\sum_{J\in \mathcal{A}_{1}(l)}\omega^{J,\epsilon}_{I}(x)V^{\epsilon}_{[J]}(x)  
\end{align*}
where $\omega^{J,\epsilon}_{I}(x)=\epsilon^{(|I|-|J|)H}\omega^{J}_{I}(x)$.

It is known that for any $\epsilon \in(0,1]$ and 
any $t > 0$, the map $\Phi^{\epsilon}_{t}(x)=X^{\epsilon, x}_{t}: \mathbb{R}^{n}\rightarrow \mathbb{R}^{n}$ is a flow of $C^{\infty}$ 
diffeomorphism (see \cite{FV-bk}). We denote the Jacobian of $\Phi^{\epsilon}_{t}(x)$ by $J^{\epsilon}_{0\rightarrow t}=\frac{\partial X^{\epsilon,x}_{t}}
{\partial x}$. As we mentioned it earlier,  $J^{\epsilon}_{0\rightarrow t}$ and $(J^{\epsilon}_{0\rightarrow t})^{-1}$ satisfies
the following linear equations:

\begin{align}\label{jacobian linear}
dJ^{\epsilon}_{0\rightarrow t}=\sum^{d}_{i=1}\partial V^{\epsilon}_{i}(X^{\epsilon,x}_{t})J^{\epsilon}_{0 \rightarrow t}dB^{i}_{t}, \quad with~ J^{\epsilon}_{0}=I
\end{align}
and
\begin{align}\label{Jacobian inverse linear}
d(J^{\epsilon}_{0 \rightarrow t})^{-1}=-\sum^{d}_{i=1}(J^{\epsilon}_{0 \rightarrow t})^{-1}\partial V^{\epsilon}_{i}(X^{\epsilon, x}_{t})
dB^{i}_{t}, \quad with~(J^{\epsilon}_{0})^{-1}=I
\end{align}
Let us introduce a linear system $\beta^{J,\epsilon}_{I}(t,x)$ that satisfies the following linear equations:
\begin{align}\label{beta}
\begin{cases}
   d\beta^{J,\epsilon}_{I}(t,x)=\displaystyle \sum^{d}_{j=1}\left(\sum_{K\in \mathcal{A}_{1}(l)}-\omega^{K,\epsilon}_{I\ast j}(X^{x,\epsilon}_{t})
   \beta^{J,\epsilon}_{K}(t,x)\right)dB^{j}_{t}\\
   \beta^{J,\epsilon}_{I}(0,x)=\delta^{J}_{I}
\end{cases}
 \end{align}
 
\begin{lemma}\label{jacobian_inverse_beta}
Fix $\epsilon \in (0,1]$. For any $ I\in \mathcal{A}_{1}(l)$, we have:
\[
 (J^{\epsilon}_{0 \rightarrow t})^{-1}(V^{\epsilon}_{[I]}(X^{\epsilon, x}_{t}))=\sum_{J\in \mathcal{A}_{1}(l)}\beta^{J,\epsilon}_{I}(t,x)V^{\epsilon}_{[J]}(x)
\]
\end{lemma}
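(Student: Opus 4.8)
The natural approach is to verify the identity by differentiating the right-hand side and showing it satisfies the same linear (matrix-valued) differential equation as the left-hand side, together with the same initial condition; uniqueness for linear rough differential equations then concludes. First I would fix $I\in\mathcal{A}_1(l)$ and set $Z^\epsilon_I(t,x):=(J^\epsilon_{0\to t})^{-1}\big(V^\epsilon_{[I]}(X^{\epsilon,x}_t)\big)$ and $W^\epsilon_I(t,x):=\sum_{J\in\mathcal{A}_1(l)}\beta^{J,\epsilon}_I(t,x)V^\epsilon_{[J]}(x)$. At $t=0$ we have $(J^\epsilon_0)^{-1}=I$ and $X^{\epsilon,x}_0=x$, so $Z^\epsilon_I(0,x)=V^\epsilon_{[I]}(x)$, while $\beta^{J,\epsilon}_I(0,x)=\delta^J_I$ gives $W^\epsilon_I(0,x)=V^\epsilon_{[I]}(x)$; the initial conditions match.

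For the dynamics, I would compute $dZ^\epsilon_I$ using the product (Leibniz) rule for rough integrals: one term comes from $d(J^\epsilon_{0\to t})^{-1}$ via \eqref{Jacobian inverse linear}, and one from $d\big(V^\epsilon_{[I]}(X^{\epsilon,x}_t)\big)$, which by the chain rule equals $\sum_{j=1}^d \partial V^\epsilon_{[I]}(X^{\epsilon,x}_t)\,V^\epsilon_j(X^{\epsilon,x}_t)\,dB^j_t$. The key algebraic cancellation is that, for each $j$,
\[
-\partial V^\epsilon_j(X^{\epsilon,x}_t)\,V^\epsilon_{[I]}(X^{\epsilon,x}_t)+\partial V^\epsilon_{[I]}(X^{\epsilon,x}_t)\,V^\epsilon_j(X^{\epsilon,x}_t)=-[V^\epsilon_j,V^\epsilon_{[I]}](X^{\epsilon,x}_t)=-V^\epsilon_{[I\ast j]}(X^{\epsilon,x}_t),
\]
using the definition $V_{[I\ast j]}=[V_{[I]},V_j]$ and antisymmetry of the bracket, together with the fact that the $\epsilon$-scalings are consistent: $\epsilon^{|I|H}\cdot\epsilon^H=\epsilon^{|I\ast j|H}$. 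Hence, modulo the action of $(J^\epsilon_{0\to t})^{-1}$ which factors through, one gets
\[
dZ^\epsilon_I(t,x)=-\sum_{j=1}^d (J^\epsilon_{0\to t})^{-1}\big(V^\epsilon_{[I\ast j]}(X^{\epsilon,x}_t)\big)\,dB^j_t=-\sum_{j=1}^d Z^\epsilon_{I\ast j}(t,x)\,dB^j_t.
\]
Now I would use Assumption \ref{UH condition} in its rescaled form, namely $V^\epsilon_{[I\ast j]}=\sum_{K\in\mathcal{A}_1(l)}\omega^{K,\epsilon}_{I\ast j}(X^{\epsilon,x}_t)\,V^\epsilon_{[K]}$, to rewrite $Z^\epsilon_{I\ast j}(t,x)=\sum_{K\in\mathcal{A}_1(l)}\omega^{K,\epsilon}_{I\ast j}(X^{\epsilon,x}_t)\,Z^\epsilon_K(t,x)$, which closes the system: the vector $\big(Z^\epsilon_I\big)_{I\in\mathcal{A}_1(l)}$ satisfies the same linear equation \eqref{beta} (read as an equation for the collection indexed by $I$, with the fixed $J$ replaced by running over a basis) that defines $\big(\sum_J\beta^{J,\epsilon}_I V^\epsilon_{[J]}\big)_I$. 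More precisely, one checks directly that $W^\epsilon_I(t,x)$ also satisfies $dW^\epsilon_I=-\sum_{j=1}^d\sum_{K}\omega^{K,\epsilon}_{I\ast j}(X^{\epsilon,x}_t)\,W^\epsilon_K\,dB^j_t$, since the $V^\epsilon_{[J]}(x)$ are constant in $t$ and \eqref{beta} provides exactly the matching drift; both $(Z^\epsilon_I)_I$ and $(W^\epsilon_I)_I$ solve this linear RDE with identical initial data, so they agree by uniqueness (which is guaranteed by the $C^\infty_b$-regularity of the coefficients $\omega^{K,\epsilon}_{I\ast j}$ and standard rough path theory, as in \cite{FV-bk}).

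The main obstacle is purely bookkeeping: one must be careful that $I\ast j$ may have length $l+1$, so that $V^\epsilon_{[I\ast j]}$ is not literally one of the "basis" fields indexed by $\mathcal{A}_1(l)$ — this is precisely where the UFG assumption \eqref{bracket_omega_UH} is invoked to re-express it inside $\mathcal{A}_1(l)$, and one should check the edge case $|I|=l$ explicitly. A secondary technical point is justifying the Leibniz/chain rule manipulations at the level of rough integrals for $H\in(1/4,1/2]$ rather than pretending $B$ is smooth; but since all vector fields are $C^\infty_b$ and the relevant processes $X^{\epsilon,x}$, $J^\epsilon$, $(J^\epsilon)^{-1}$ are the canonical RDE solutions, these are standard facts from rough path calculus, and the smooth-path computation above is valid verbatim in that setting by continuity of the Itô–Lyons map.
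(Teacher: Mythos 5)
Your proposal is correct and follows essentially the same route as the paper: both sides are shown to solve the same linear (rough) differential equation $d\,(\cdot)_I=-\sum_{j}\sum_{K}\omega^{K,\epsilon}_{I\ast j}(X^{\epsilon,x}_t)(\cdot)_K\,dB^j_t$ with initial data $V^\epsilon_{[I]}(x)$, the UFG assumption being used exactly to close the system after the commutator $-V^\epsilon_{[I\ast j]}$ appears, and uniqueness concludes. (Only a cosmetic caveat: your intermediate bracket identity has a convention-dependent sign slip, but the final expression $-V^\epsilon_{[I\ast j]}$ and hence the resulting equation agree with the paper's.)
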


\begin{proof}
To simpify the notation, let us denote 
\[
a^{\epsilon}_{I}(t,x)= (J^{\epsilon}_{0 \rightarrow t})^{-1}(V^{\epsilon}_{[I]}(X^{\epsilon, x}_{t}))
\]
and 
\[
b^{\epsilon}_{I}(t,x)=\sum_{J\in \mathcal{A}_{1}(l)}\beta^{J,\epsilon}_{I}(t,x)V^{\epsilon}_{[J]}(x)
\]
Clearly by definition, we have $a^{\epsilon}_{I}(0,x)=b^{\epsilon}_{I}(0,x)=V^{\epsilon}_{[I]}(x)$. Next, we show that $a^{\epsilon}_{I}(t,x)$ and
$b^{\epsilon}_{I}(t,x)$ satisfy the same differential equation. Indeed, by change of variable formula, we have:
\begin{align*}
da^{\epsilon}_{I}(t,x)=&d(J^{\epsilon}_{0\rightarrow t})^{-1}(V^{\epsilon}_{[I]}(X^{\epsilon, x}))\\
		      =&\sum^{d}_{j=1}(-1)(J^{\epsilon}_{0\rightarrow t})^{-1}[V^{\epsilon}_{[I]}, V^{\epsilon}_{j}](X^{\epsilon, x}_{t})(x)dB^{j}_{t}\\
		      =&\sum^{d}_{j=1}\sum_{J\in \mathcal{A}_{1}(l)}-\omega^{J,\epsilon}_{I\ast j}(X^{\epsilon, x}_{t})
		      (J^{\epsilon}_{0\rightarrow t})^{-1}V^{\epsilon}_{[J]}(X^{\epsilon,x}_{t})dB^{j}_{t}\\
		      =&\sum^{d}_{j=1}\sum_{J\in \mathcal{A}_{1}(l)}-\omega^{J,\epsilon}_{I\ast j}(X^{\epsilon, x}_{t})a^{\epsilon}_{J}(t,x)dB^{j}_{t}
\end{align*}
on the other hand, by the definition of $\beta^{J, \epsilon}_{I}(t,x)$, we have:
\begin{align*}
db^{\epsilon}_{I}(t,x)=&d(\sum_{K\in \mathcal{A}_{1}(l)}\beta^{K,\epsilon}_{I}(t,x)V^{\epsilon}_{[K]}(x))\\
		       =&\sum_{K\in \mathcal{A}_{1}(l)}d\beta^{K,\epsilon}_{I}(t,x)V^{\epsilon}_{[K]}(x)\\
		       =&\sum_{j=1}^{d}\sum_{J\in \mathcal{A}_{1}(l)}-\omega^{J,\epsilon}_{I\ast j}(X^{\epsilon, x}_{t})
		       \sum_{K\in \mathcal{A}_{1}(l)}\beta^{K,\epsilon}_{J}(t,x)V^{\epsilon}_{[K]}(x)dB^{j}_{t}\\
		       &\sum_{j=1}^{d}\sum_{J\in \mathcal{A}_{1}(l)}-\omega^{J,\epsilon}_{I\ast j}(X^{\epsilon, x}_{t})
		       b^{\epsilon}_{J}(t,x)dB^{j}_{t}
\end{align*}
And the result follows by the uniqueness of solutions.
\end{proof}

The following lemma gives the order of $\beta^{J,\epsilon}_{I}(t,x)$.
\begin{lemma}\label{Taylor expansion of beta}
Let $I,J\in \mathcal{A}_{1}(l)$ such that $|I|\le |J|$, then 
\[
\beta^{J,\epsilon}_{I}(t,x)=\sum_{L\in \mathcal{A}}\delta^{J}_{I\ast L}(-1)^{|L|}B^{K}_{t}+\gamma^{\epsilon,J}_{I}(t,x)
\]
where
\[
\sup_{x \in \mathbb{R}^n} \mathbb{E}\left[\left(\sup_{t\in(0,1], \epsilon \in(0,1]}t^{-(l+1-|I|)H}|\gamma^{\epsilon,J}_{I}(t,x)|\right)^{p}\right]< \infty
\]
holds for any $p \ge 1$.
\end{lemma}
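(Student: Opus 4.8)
The plan is to split $\beta^{J,\epsilon}_I$ into its purely iterated‑integral part and a remainder, to characterise the remainder by the linear rough differential equation it itself solves, and then to read off the order in $t$ by rescaling the vector fields in $\epsilon$ rather than by a direct small‑time analysis.

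\textbf{Step 1: isolating the leading term.} When $|I|<l$ the word $I\ast j$ has length $\le l$, so one may take $\omega^{K}_{I\ast j}=\delta^{K}_{I\ast j}$ in \eqref{bracket_omega_UH}, and \eqref{beta} reduces to $d\beta^{J,\epsilon}_I(t,x)=-\sum_{j}\beta^{J,\epsilon}_{I\ast j}(t,x)\,dB^{j}_t$. Iterating this identity (using $\beta^{J,\epsilon}_W(0,x)=\delta^{J}_W$) produces the term $L^{J}_{I}(t):=\sum_{L\in\mathcal{A}}\delta^{J}_{I\ast L}(-1)^{|L|}B^{L}_t$ — a single iterated integral of $B$ over the tail word of $J$, of homogeneity $(|J|-|I|)H$ and nonzero only when $I$ is a prefix of $J$ — plus a remainder, which I would take as the definition of $\gamma^{\epsilon,J}_I$. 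A short computation, using that the ``diagonal'' contributions cancel when $|I|<l$, shows that $\gamma^{\epsilon,J}_I(0,x)=0$ and
\[
d\gamma^{\epsilon,J}_I(t,x)=
\begin{cases}
-\sum_{j}\gamma^{\epsilon,J}_{I\ast j}(t,x)\,dB^{j}_t, & |I|<l,\\
-\sum_{j}\sum_{K\in\mathcal{A}_{1}(l)}\omega^{K,\epsilon}_{I\ast j}(X^{\epsilon,x}_t)\bigl(\gamma^{\epsilon,J}_{K}(t,x)+ L^{J}_{K}(t)\bigr)\,dB^{j}_t, & |I|=l.
\end{cases}
\]
The key structural point is that the inhomogeneous term enters only through the $|I|=l$ equations and, since $L^{J}_K\neq0$ forces $|K|\le|J|$, it carries the factor $\omega^{K,\epsilon}_{I\ast j}=\epsilon^{(l+1-|K|)H}\omega^{K}_{I\ast j}$ with $l+1-|K|\ge l+1-|J|\ge1$; so the source is of order $\epsilon^{(l+1-|J|)H}$.

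\textbf{Step 2: uniform moment bounds.} The collection $\bigl(X^{\epsilon,x},J^{\epsilon},(J^{\epsilon})^{-1},\beta^{\cdot,\epsilon}_{\cdot},\gamma^{\cdot,\epsilon}_{\cdot}\bigr)$ is the solution of one linear rough differential equation driven by $B$, whose coefficient vector fields are smooth and bounded together with all derivatives \emph{uniformly in} $\epsilon\in(0,1]$ and $x\in\mathbb{R}^{n}$, because every $\epsilon$‑power occurring ($\epsilon^{H}$, $\epsilon^{(l+1-|K|)H}$) is $\le1$. By the Cass–Litterer–Lyons moment estimates \cite{CLL} (cf. Proposition \ref{prop:moments-jacobian}), the $p$‑variation and supremum norms on $[0,1]$ of all these processes lie in $L^{q}(\Omega)$ for every $q$, uniformly in $(\epsilon,x)$. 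Unwinding the equation for $\gamma$ from Step 1, $\gamma^{\epsilon,J}_I$ is a finite sum of iterated rough integrals of depth $l+1-|I|$ whose integrands are controlled paths with all moments bounded uniformly in $(\epsilon,x)$, each such integrand carrying a factor $\epsilon^{(l+1-|J|)H}$ (up to terms with strictly higher $\epsilon$‑power).

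\textbf{Step 3: the correct power of $t$, by rescaling.} Here I would exploit the self‑similarity of $B$: as a lifted rough path, $(B_{tr})_{r\in[0,1]}\overset{d}{=}(t^{H}B_r)_{r\in[0,1]}$, the $k$‑th level scaling by $t^{kH}$. Substituting $B_{tr}=t^{H}\widetilde B_r$ in \eqref{scaled sde} absorbs $t^{H}$ into the rescaled vector fields, so $(X^{\epsilon,x}_{tr})_r\overset{d}{=}(X^{\epsilon t,x}_r)_r$; inserting this into the $\gamma$‑equation of Step 1 and tracking homogeneities (in particular $\omega^{K,\epsilon}_{I\ast j}\,t^{H}=\omega^{K,\epsilon t}_{I\ast j}\,t^{(|K|-|I|)H}$ when $|I|=l$, and $L^{J}_K(tr)\overset{d}{=}t^{(|J|-|K|)H}L^{J}_K(r)$) yields, after matching the two systems, $\gamma^{\epsilon,J}_I(t,x)\overset{d}{=}t^{(|J|-|I|)H}\,\gamma^{\epsilon t,J}_I(1,x)$. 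Combining this with the single‑time bound from Steps 1–2, $\sup_{x}\mathbb{E}\bigl[\bigl((\epsilon')^{-(l+1-|J|)H}|\gamma^{\epsilon',J}_I(1,x)|\bigr)^{p}\bigr]\le C_p$ uniformly in $\epsilon'\in(0,1]$, and setting $\epsilon'=\epsilon t\le1$, gives $\mathbb{E}\bigl[|\gamma^{\epsilon,J}_I(t,x)|^{p}\bigr]^{1/p}\le C_p\,\epsilon^{(l+1-|J|)H}t^{(l+1-|I|)H}$, uniformly in $t,\epsilon\in(0,1]$ and $x$; the supremum over $(t,\epsilon)\in(0,1]^{2}$ is then pulled inside the expectation by a Garsia–Rodemich–Rumsey argument in the parameters $(t,\epsilon)$, using the joint Hölder regularity of $(t,\epsilon)\mapsto\gamma^{\epsilon,J}_I(t,x)$ with these homogeneities (again from Step 2).

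\textbf{Main obstacle.} The crux is Step 3: getting the \emph{exact} exponent $(l+1-|I|)H$ — a direct pathwise rough‑integral estimate only delivers $(l+1-|I|)H-\eta$, since $\|B\|_{p\text{-var};[0,t]}$ is itself merely $O(t^{H-\eta})$ — together with uniformity over the full range of $(t,\epsilon)$. This is precisely where, as the authors indicate, one rescales the vector fields in $\epsilon$ instead of analysing the small‑time asymptotics; the real work is the homogeneity bookkeeping in the $\gamma$‑equation and the identification of the $\epsilon^{(l+1-|J|)H}$ gain in the source term, which the representation of $\gamma$ in Step 1 is designed to expose. By contrast Steps 1–2 are routine: iteration of a linear equation and the standard uniform moment bounds.
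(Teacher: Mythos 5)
Your Step 1 is, in substance, the paper's own decomposition: iterating the linear system (\ref{beta}), using that $\omega^{K,\epsilon}_{W}=\delta^{K}_{W}$ for $|W|\le l$, peels off the signed iterated integrals $\sum_{L}\delta^{J}_{I\ast L}(-1)^{|L|}B^{L}_{t}$ and leaves a remainder which is a finite sum of depth-$(l+1-|I|)$ iterated rough integrals with innermost integrands $\omega^{K,\epsilon}_{I\ast L\ast j}(X^{\epsilon,x}_{s})\beta^{J,\epsilon}_{K}(s,x)$; your reformulation of this as a linear RDE for $\gamma$ with a source supported on the length-$l$ words is equivalent, and your extra observation that the source carries $\epsilon^{(l+1-|J|)H}$ is a correct (if unneeded) refinement. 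The divergence is in the quantitative step. The paper concludes in one line by invoking the deterministic Euler/Taylor remainder estimate for RDEs (Friz--Victoir, Theorem 10.41), which gives the \emph{pathwise} bound $|\gamma^{\epsilon,J}_{I}(t,x)|\le C\,t^{(l+1-|I|)H}\sum\|\omega^{K,\epsilon}_{I\ast L\ast j}\|_{Lip^{\gamma-1}}$ with a single random constant $C\in \mathbf{L}^{p}$ valid for all $t$ and $\epsilon$ simultaneously, uniformity in $\epsilon$ being free since $\|\omega^{K,\epsilon}_{W}\|=\epsilon^{(|W|-|K|)H}\|\omega^{K}_{W}\|\le\|\omega^{K}_{W}\|$. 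You instead establish a fixed-$(t,\epsilon)$ moment bound via self-similarity --- your scaling identity $\gamma^{\epsilon,J}_{I}(t,\cdot)\overset{d}{=}t^{(|J|-|I|)H}\gamma^{\epsilon t,J}_{I}(1,\cdot)$ does check out against the homogeneities of the system --- and then try to upgrade it to the statement with the supremum over $(t,\epsilon)$ \emph{inside} the expectation.

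That upgrade is where the gap lies. A Garsia--Rodemich--Rumsey or Kolmogorov argument starting from $\mathbb{E}[|\gamma(t)|^{p}]^{1/p}\le C\,t^{NH}$ (with $N=l+1-|I|$) together with H\"older-type increment bounds necessarily loses an arbitrarily small power: it yields $\sup_{t}t^{-NH+\eta}|\gamma(t)|\in \mathbf{L}^{p}$ for every $\eta>0$, never $\eta=0$. And the exact exponent is genuinely borderline here: for $|I|=l$ one has $\gamma^{\epsilon,J}_{I}(t,x)=-\sum_{j}\int_{0}^{t}\sum_{K}\omega^{K,\epsilon}_{I\ast j}(X^{\epsilon,x}_{s})\beta^{J,\epsilon}_{K}(s,x)\,dB^{j}_{s}$, whose leading behaviour near $t=0$ is $-\sum_j\omega^{J,\epsilon}_{I\ast j}(x)B^{j}_{t}$, and the local law of the iterated logarithm shows that a quantity like $\sup_{t\le 1}t^{-H}|B^{j}_{t}|$ cannot be reached from fixed-$t$ moment bounds by any interpolation in $t$. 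So your route, as written, proves the lemma only with $(l+1-|I|)H$ replaced by $(l+1-|I|)H-\eta$; to obtain the uniform-in-$t$ statement you need a pathwise inequality on each interval $[0,t]$ with a $t$-independent random constant, which is exactly the ingredient the paper imports from the Euler estimate and which the scaling argument, correct as it is at each fixed time, does not replace. (It is worth noting that the weakened version with the loss of $\eta$ is all that is actually consumed downstream, but it is not the statement of the lemma.)
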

\begin{proof}
Let us consider the  Taylor expansion obtained by iterating the equation (\ref{beta}). Note that since 
\[
V_{[I]}(x)=\sum_{J\in \mathcal{A}_{1}(l)}\omega^{J}_{I}(x)V_{[J]}(x)
\]
then we know that for any $\epsilon \in (0,1]$ and when $|I|\le l$, $\omega^{J,\epsilon}_{I}=\omega^{J}_{I}=\delta^{J}_{I}$. For any $I,J\in \mathcal{A}_{1}(l)$ with $|I|\le|J|$, we have:

\begin{align*}
\beta^{J,\epsilon}_{I}(t,x)=&\delta^{J}_{I}+\displaystyle \sum^{d}_{j=1}\int^{t}_{0}\left(\sum_{K\in \mathcal{A}_{1}(l)}-\omega^{K,\epsilon}_{I\ast j}(X^{\epsilon}_{s})
                             \beta^{J,\epsilon}_{K}(s,x)\right)dB^{j}_{s}\\
                          =&\delta^{J}_{I}+\sum^{d}_{j=1}\int^{t}_{0}(-1)\beta^{J,\epsilon}_{I\ast j}(s,x)dB^{j}_{s}
\end{align*}
Now let us iterate this equation $l-|I|+1$ times and we have:
\begin{align*}
 \beta^{J,\epsilon}_{I}(t,x)=&\delta^{J}_{I}+\sum^{d}_{l_{1}=1}\int^{t}_{0}(-1)\beta^{J,\epsilon}_{I\ast l_{1}}(s_1,x)dB^{l_1}_{s_1}\\
                   =&\delta^{J}_{I}+\sum^{d}_{l_{1}=1}(-1)B^{l_{1}}\delta^{J}_{I\ast l_{1}}+\sum^{d}_{l_1,l_2=1}\int^{t}_{0}
                   \int^{s_{1}}_{0}(-1)^{2}\beta^{J,\epsilon}_{I\ast l_1 \ast l_2}(s_2,x)dB^{l_2}_{s_2}dB^{l_1}_{s_1}\\
                   \cdot \\
                   \cdot \\
                   \cdot \\
                  =&\sum_{L\in \mathcal{A}}\delta^{J}_{I\ast L}(-1)^{|L|}B^{L}_{t}+\sum_{L,j}\sum_{K\in \mathcal{A}_{1}(l)}
                  \int^{t}_{0}\cdots \int^{s_k}_{0}(-1)^{|L|+1}\omega^{K,\epsilon}_{I\ast L\ast j}(X^{\epsilon}_{s_{k+1}})\beta^{J,\epsilon}_{K}(s_{k+1},x)
                  dB^{j}_{s_{k+1}}\cdots dB^{l_{1}}_{s_1}\\
                  =&\sum_{L\in \mathcal{A}}\delta^{J}_{I\ast L}(-1)^{|L|}B^{L}_{t}+\gamma^{\epsilon,J}_{I}(t,x)
\end{align*}
where $\gamma^{\epsilon,J}_{I}(t,x)$ denotes the remainder term. Now,  as an application of  Theorem 10.41 in \cite{FV-bk} (see also \cite{BC}), there exists a random variable $C \in \mathbf{L}^{p}$ such that:
\[
 \|\gamma^{\epsilon,J}_{I}(t,x) \|\le Ct^{(l-|I|+1)H}\sum_{L,j}\sum_{K\in \mathcal{A}_{1}(l)}\|\omega^{K,\epsilon}_{I\ast L\ast j}\|_{Lip^{\gamma-1}}
\]
where $\gamma>1/H$ and $\| \cdot \|_{Lip^{\gamma-1}}$ is the $\gamma-1$-Lipschitz norm.  The result follows then easily.
\end{proof}

\begin{remark}
 Note that 
 \begin{align*}
 \sum_{L\in \mathcal{A}}\delta^{J}_{I\ast L}(-1)^{|L|}B^{L}_{t}=
  \begin{cases}
  (-1)^{|K|}B^{K}_{t}, \quad if J=I\ast K~for~some~K\in \mathcal{A}\\
  0, \quad otherwise
  \end{cases}
  \end{align*}
  Therefore, when $t\rightarrow 0$, the dominating term of $\beta^{\epsilon,J}_{I}(t,x)$ is of order $O(t^{H(|J|-|I|)})$. 
\end{remark}

Now, let us introduce the following notations:
for  any $J \in \mathcal{A}_{1}(l)$,
\[
D^{(J)}f(X^{\epsilon, x}_{t})=\langle \mathbf{D}_{\cdot}f(X^{x,\epsilon}_{t}), \beta^{J,\epsilon}(\cdot,~x)1_{[0,t]}(\cdot)\rangle_{\mathcal{H}} 
\]
where we denote by $\beta^{J,\epsilon}(\cdot,x)$  the column vector $(\beta^{J, \epsilon}_i(\cdot,x))_{i=1,...,n}.$
For any $I$, $J \in \mathcal{A}_{1}(l)$, we define
\[
 M^{\epsilon}_{I,J}(t,x)=\langle \beta^{I,\epsilon}(\cdot,~x)1_{[0,t]}(\cdot), \beta^{J, \epsilon}(\cdot,~x)1_{[0,t]}(\cdot)\rangle_{\mathcal{H}}.
\]
In the following part, we will only consider the case $t=1$ and we write $M^{\epsilon}_{I,J}(x)$ instead of $M^{\epsilon}_{I,J}(1,x)$. 

The following theorem is the main result of this section and the main technical difficulty of our work:

\begin{theorem}\label{M inverse bound}
 For any $p \in (1, \infty)$, 
 \[
 \sup_{\epsilon \in (0,1], x\in \mathbb{R}^{n}}\mathbb{E}\left(\|(M^{\epsilon}_{I,J}(x))_{I,J\in \mathcal{A}_{1}(l)}\|^{-p}\right)< \infty 
 \]
\end{theorem}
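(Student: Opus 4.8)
The plan is to establish the estimate
$$
\sup_{\epsilon\in(0,1],\,x\in\mathbb R^n}\mathbb E\Big(\big\|(M^{\epsilon}_{I,J}(x))_{I,J\in\mathcal A_1(l)}^{-1}\big\|^p\Big)<\infty
$$
via the standard two-step scheme for inverse moments of a nonnegative symmetric random matrix: first a quantitative lower bound on $v^T M^{\epsilon}(x) v$ holding off an event of superpolynomially small probability, uniformly in $\epsilon$ and $x$; then conversion of this into the $L^p$ bound on the operator norm of the inverse by a Chebyshev-type argument (if $\mathbb P(\lambda_{\min}(M)\le\delta)\lesssim_q \delta^q$ for all $q$, then $\mathbb E[\lambda_{\min}(M)^{-p}]<\infty$). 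Since $M^{\epsilon}(x)$ is a Gram matrix, $v^T M^{\epsilon}(x) v=\|g^{\epsilon}_v\|_{\mathcal H}^2$ where $g^{\epsilon}_v(\cdot)=\sum_{I\in\mathcal A_1(l)} v_I\,\beta^{I,\epsilon}(\cdot,x)\mathbf 1_{[0,1]}(\cdot)$. So everything reduces to: for a fixed unit vector $v$, show $\|g^{\epsilon}_v\|_{\mathcal H}$ cannot be too small with overwhelming probability, uniformly in $v$, $x$, $\epsilon$.

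The heart is the lower bound on $\|g^{\epsilon}_v\|_{\mathcal H}$. Here I would use the interpolation inequalities recalled in Section 2: for $H>1/2$, $\|g\|_{\mathcal H}\gtrsim \|g\|_\infty^{3+1/\gamma}/\|g\|_\gamma^{2+1/\gamma}$, and for $1/4<H<1/2$, $\|g\|_{\mathcal H}\gtrsim\|g\|_{L^2}\gtrsim$ (via the stated $L^2$–Hölder–sup inequality) a power of $\|g\|_\infty$ divided by a power of $\|g\|_\gamma$. In both regimes it therefore suffices to (a) control the H\"older norm $\|g^{\epsilon}_v\|_\gamma$ by a random variable with all moments, uniformly in $\epsilon,x$, and (b) bound $\|g^{\epsilon}_v\|_\infty$ below. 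For (a) I would use the Taylor expansion of $\beta^{J,\epsilon}_I$ from Lemma~\ref{Taylor expansion of beta}: the leading part is a fixed polynomial in the increments $B^L_t$, whose H\"older norm has all moments and is independent of $\epsilon,x$, while the remainder $\gamma^{\epsilon,J}_I$ is controlled uniformly by the same lemma; the combined bound passes through the (contractive, $\epsilon\le 1$) structure of the $\omega^{J,\epsilon}_I$. For (b), the key observation is that by Lemma~\ref{Taylor expansion of beta} and its Remark, $g^{\epsilon}_v(t)$ is, to leading order as $t\to 0$, $\sum_{|J|=1} v_J V^{\epsilon}_{[J]}(x)\cdot(\text{const})$ plus terms $\beta^{J,\epsilon}_I$ with $|J|>|I|$ that vanish faster; more precisely the expansion expresses $g^{\epsilon}_v$ as a linear combination of iterated integrals $B^L_t$ with coefficients built from the $V^{\epsilon}_{[J]}(x)$ and the $v_I$, and by the UFG hypothesis (Assumption~\ref{UH condition}) together with the normalization $\|v\|=1$ at least one such coefficient, in the appropriate weighted sense, is bounded below.

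At this point the argument reduces to: a fixed (degree $\le l$, dimension-bounded) linear combination $\sum_L c_L(v,x,\epsilon) B^L_t$ of iterated integrals of the fBm, with $\sum_L |c_L|$ (suitably $\epsilon$-weighted) bounded below, has small sup-norm on $[0,1]$ only with superpolynomially small probability, uniformly in the coefficients. This is exactly the place to invoke a quantitative Norris-type lemma — the interpolation-based versions from \cite{BH,HP} — combined with the small ball probability estimates for fractional Brownian motion from \cite{LS}: the Norris lemma lets one deduce that smallness of $\sum_L c_L B^L$ forces smallness of each coefficient $c_L$ (up to a loss controlled by the H\"older norms already bounded in (a)), and the small-ball estimates quantify how unlikely it is for the lowest-order increments $B^j_t$ themselves to be uniformly small on a short interval. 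The uniformity in $\epsilon$ is handled precisely by the rescaling built into the $V^\epsilon_{[I]}$ and $\omega^{J,\epsilon}_I$: since $\epsilon\in(0,1]$ the factors $\epsilon^{(|I|-|J|)H}$ only improve the relevant contraction estimates, so the probabilistic bounds are uniform. Assembling (a), (b), the interpolation inequality, and the union bound over a sufficiently fine net of unit vectors $v$ (the matrix entries being Lipschitz in $v$) then yields $\mathbb P(\lambda_{\min}(M^\epsilon(x))\le\delta)\lesssim_q\delta^q$ uniformly in $x,\epsilon$, and hence the claimed inverse moment bound.

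\textbf{Main obstacle.} The delicate point is the uniform-in-$\epsilon$, uniform-in-$x$ lower bound in step (b): one must show that the UFG condition, not the full H\"ormander condition, still guarantees that $g^\epsilon_v$ does not degenerate, and that this degeneracy control survives the rescaling. This is where the quantitative Norris lemma has to be applied with constants that do not blow up as $\epsilon\to 0$, and it is the step that genuinely replaces the Markov/martingale machinery used by Kusuoka–Stroock in the Brownian case.
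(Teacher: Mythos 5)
Your proposal follows essentially the same route as the paper: reduce the inverse-moment bound to a uniform small-ball estimate for the quadratic form $v^TM^\epsilon(x)v$ via the standard lemma (Lemma 2.3.1 in \cite{Nu06}), pass from the $\mathcal H$-norm to the sup/$L^2$ norm with the interpolation inequalities of Section 2 while controlling the H\"older norm in $L^p$, expand $\beta^{J,\epsilon}_I$ as a linear combination of iterated integrals $B^L_t$ plus a uniformly small remainder (Lemma \ref{Taylor expansion of beta}), and prove the small-ball estimate for such combinations by induction on the word length using the Norris-type lemmas of \cite{BH,CHLT} together with the fBm small-ball bounds of \cite{LS}, with uniformity in $\epsilon$ coming from the scaling of the $\omega^{J,\epsilon}_I$. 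This matches the paper's proof in all essential steps.
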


The proof of the Theorem \ref{M inverse bound} is splitted in several steps.

\begin{lemma}\label{sup_norm}
For $ m\ge 0$ and $p \ge 1$, there exists a constant $C_{H,d,p}>0$ such that for any small $\epsilon>0$
\[
\sup_{\sum a^{2}_{I}=1}\mathbb{P}\left( \left\|\sum_{I\in \mathcal{A}(m)}a_{I}B^{I}_{t}\right\|_{\infty,[0,1]}< \epsilon\right)\le C_{H,n,p}\epsilon^{p}
\]
\end{lemma}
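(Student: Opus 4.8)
The goal is a small-ball estimate: the sup-norm of a non-trivial linear combination $\sum_{I\in\mathcal A(m)} a_I B^I_t$ of iterated integrals of the fractional Brownian motion is rarely small, with a polynomial bound of arbitrarily high order. The natural strategy is a two-step reduction followed by an appeal to the known small-ball behaviour of $B$ itself. First I would observe that the iterated integrals $B^I$ of the words of length $\le m$, together with the constant $1$, span a finite-dimensional space of functions on $[0,1]$, and that the map sending the coefficient vector $(a_I)$ to the function $\sum a_I B^I_{\cdot}$ is (almost surely) injective — this is the statement that the $B^I$ are linearly independent as functions, which follows from the fact that distinct iterated integrals are not identically equal along the rough path lift of fBm (equivalently, from the non-degeneracy/Chen-relation structure, or from the genuine roughness of $B$; see \cite{CQ,FV-bk}). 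Consequently on the unit sphere $\sum a_I^2 = 1$ the random variable $\big\|\sum a_I B^I\big\|_{\infty,[0,1]}$ is a.s. strictly positive, and by compactness of the sphere and continuity, its infimum over the sphere is a.s. strictly positive.

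Next I would make the bound uniform and quantitative. The key structural input is scaling: by self-similarity of fractional Brownian motion, $(B_{\lambda t})_{t\ge0}\stackrel{d}{=}(\lambda^H B_t)_{t\ge 0}$, which lifts to $B^I_{\lambda t}\stackrel{d}{=}\lambda^{|I|H}B^I_t$ on iterated integrals. This lets one rescale the event $\{\|\cdot\|_\infty<\epsilon\}$ to a fixed $\epsilon$ and transfer smallness of the whole linear combination to smallness on a short time interval, or alternatively to reduce the estimate to the single highest-order term. The cleanest route is: condition on the behaviour near $t=0$. On a small interval $[0,s]$ the combination is dominated by the word of lowest length appearing with a non-zero coefficient — by Lemma \ref{Taylor expansion of beta}-type Taylor expansions (or directly by the iterated-integral hierarchy), $\sum a_I B^I_t \approx \sum_{|I|=k_0} a_I B^I_t$ for $t$ small, where $k_0$ is the minimal length with $\sum_{|I|=k_0}a_I^2$ bounded below. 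One then reduces to a small-ball estimate for a fixed linear combination of level-$k_0$ iterated integrals, uniformly in the (spherical) coefficients by compactness.

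Finally I would invoke the small-ball probability estimates for fractional Brownian motion from \cite{LS}: for the fBm itself one has $\mathbb P(\|B\|_{\infty,[0,1]}<\epsilon)\le \exp(-c\epsilon^{-1/H})$, which decays faster than any power of $\epsilon$. To pass from $B$ to a fixed non-zero linear functional of its iterated integrals, I would use a Cameron–Martin / conditioning argument or a Norris-type nondegeneracy: if $\sum a_I B^I$ is uniformly small then, peeling off one level at a time via the relation $d B^{I\ast j} = B^I\,dB^j$ and a quantitative Norris lemma (of the kind used in \cite{BH,HP}, based on interpolation inequalities), smallness propagates down to $\|B\|_\infty$ being small, incurring only polynomial losses in $\epsilon$. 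Combining the super-polynomial bound on $\mathbb P(\|B\|_\infty<\delta)$ with these polynomial losses yields $C_{H,n,p}\epsilon^p$ for every $p$, uniformly over the sphere by the compactness argument above.

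\medskip
The main obstacle, I expect, is making the reduction from a generic sphere-element $(a_I)$ down to a controlled one-level combination \emph{uniformly}: one must rule out, with quantitative control, the possibility that cancellations among different words conspire to make the combination small on all of $[0,1]$ even though the coefficient vector is normalized. This is exactly where the quantitative Norris-type lemma and the interpolation inequalities \eqref{interpolation H big} (or their $L^2$ analogues for $H<1/2$) do the real work, and where the careful bookkeeping of the iterated-integral hierarchy is needed. Note also that the statement as phrased fixes $t$ implicitly at $t=1$ (matching the convention $M^\epsilon_{I,J}(x)=M^\epsilon_{I,J}(1,x)$ adopted just above), so the scaling argument is used only internally, not to handle a free parameter $t$.
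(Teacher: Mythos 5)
Your third paragraph is, in substance, the paper's proof: one argues by induction on $m$, writes $\sum_{I\in\mathcal A_1(m+1)}a_I B^I_t=\sum_{j=1}^d\int_0^t\bigl(\sum_{J\in\mathcal A(m)}a_{J*j}B^J_s\bigr)\,dB^j_s$, peels off the outermost integration with a quantitative Norris-type lemma (Proposition 3.4 of \cite{BH} when $H>1/2$, Theorem 5.6 of \cite{CHLT} when $1/4<H\le 1/2$), accepts a polynomial loss in $\epsilon$, and invokes the estimate at the previous level; the base of the induction is the Li--Shao small-ball estimate \cite{LS} for one-dimensional fBm, which decays faster than any power and therefore absorbs all the polynomial losses. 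Two points where your write-up is weaker than the paper. First, uniformity over the sphere does not come from ``compactness of the sphere and continuity'': a pointwise bound $\mathbb P(\cdot)\le C_a\epsilon^p$ plus compactness does not control $\sup_a\mathbb P(\cdot)$. The paper gets uniformity from the structure of the induction itself: after peeling, one pigeonholes a direction $k$ with $\sum_{J}a_{J*k}^2\ge 1/d$, renormalizes that inner combination to unit length (costing only a factor $\sqrt d$ inside the probability), and applies the induction hypothesis to the renormalized vector; the constant coefficient $a_\emptyset$ is handled separately by evaluating at $t=0$ (if $|a_\emptyset|\ge\epsilon$ the event is empty, otherwise the remaining coefficients still carry mass at least $1-\epsilon^2$). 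Second, the ``cleanest route'' you propose first --- domination near $t=0$ by the lowest-length word carrying non-negligible coefficient mass --- is not what the paper does and is the shakier of your two alternatives: words of length below your $k_0$ may have small but nonzero coefficients and then dominate as $t\to0$, so that reduction would need its own quantitative control; the Norris peeling sidesteps this entirely. With the pigeonhole/renormalization step and the $a_\emptyset$ case filled in, your Norris-based route coincides with the paper's argument.
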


\begin{proof} 
We first prove the statement when $H>1/2$. Note that when $m=0$, $\mathcal{A}(m)=\{\emptyset\}$ and $\|a_{\emptyset}\|=1$. The statement is true for any $\epsilon <1$.
When $m=1$, $\mathcal{A}(m)=\{\emptyset, 1,2,\cdots,d \}$. Let $f(t)=a_{\emptyset}+\sum^{d}_{i=1}a_{\{i\}}B^{i}_{t}$. We first assume that $a_{\emptyset}=0$, then 
$f(t)=\sum^{d}_{i=1}a_{\{i\}}B^{i}_{t}$ has the same law as one dimensional fractional Brownian motion $B_{t}$. Then by Theorem 4.6 in \cite{LS} we have:
\[
\mathbb{P}(\|f(t)\|_{\infty, [0,1]}<\epsilon)=\mathbb{P}(\|B_{t}\|_{\infty, [0.1]}<\epsilon)\le C_{H,p}\epsilon^{p}
\]

Now if $a_{\emptyset}\neq 0$, since $f(0)=a_{\emptyset}$, we have:
\begin{align*}
\mathbb{P}(\|f(t)\|_{\infty, [0,1]}<\epsilon) \le& \mathbb{P}(\|f(t)\|_{\infty, [0,1]}<\epsilon,~|a_{\emptyset}|\ge \epsilon)
+\mathbb{P}(\|f(t)\|_{\infty, [0,1]}<\epsilon, |a_{\emptyset}|<\epsilon)\\
						 =& \mathbb{P}(\|f(t)\|_{\infty, [0,1]}<\epsilon, ~|a_{\emptyset}|<\epsilon)\\
						\le &\mathbb{P}(\|\sum^{d}_{i=1}a_{\{i\}}B_{t}^{i}\|_{\infty, [0,1]}-|a_{\emptyset}|<\epsilon,~|a_{\emptyset}|<\epsilon)\\
						\le &\mathbb{P}(\|\sum^{d}_{i=1}a_{\{i\}}B_{t}^{i}\|_{\infty, [0,1]}<2\epsilon)\\
						\le &\mathbb{P}\left(\left\|\sum^{d}_{i=1}\frac{a_{\{i\}}}{\sqrt{\sum a^{2}_{\{i\}}}}B^{i}_{t}\right\|_{\infty, [0,1]}<
									              \frac{2\epsilon}{\sqrt{\sum a^{2}_{\{i\}}}}\right)
\end{align*}
Note that when $|a_{\emptyset}|<\epsilon$, we have $\sum^{d}_{i=1}a^{2}_{\{i\}}\ge 1-\epsilon^{2}$. Therefore when $\epsilon<\frac{\sqrt{3}}{2}$, we have 
\begin{align*}
\mathbb{P}(\|f(t)\|_{\infty, [0,1]}<\epsilon)\le& \mathbb{P}\left(\left\|\sum^{d}_{i=1}\frac{a_{\{i\}}}{\sqrt{\sum a^{2}_{\{i\}}}}B^{i}_{t}\right\|_{\infty, [0,1]}< 4\epsilon \right)\\
					      \le& C_{n,p}\epsilon^{p} ,
\end{align*}
where the last inequality follow by the earlier case when $a_{\emptyset}=0$. Now we assume that the statement is true for every $ k=0,1,\cdots, m$. As in the case when $m=1$, 
we may assume that $a_{\emptyset}=0$.Let $\displaystyle f(t)=\sum_{I\in \mathcal{A}_{1}(m+1)}a_{I}B^{I}_{t}$ with the restriction $\displaystyle \sum_{I\in \mathcal{A}_{1}(m+1)}a^{2}_{I}=1$.
Note that $B^{I}_{t}$'s are iterated integrals and we have $\displaystyle B^{I*j}_{t}=\int^{t}_{0}B^{I}_{s}dB^{j}_{s}$. Therefore,
\begin{align*}
f(t)=&\sum_{I \in \mathcal{A}_{1}(m+1)}a_{I}B^{I}_{t}\\
    =&\sum^{d}_{j=1}\int^{t}_{0}\left(\sum_{J \in \mathcal{A}(m)}a_{J*j}B^{J}_{t}\right)dB^{j}_{t},
\end{align*}
where $\displaystyle \sum^{d}_{j=1}\sum_{J\in \mathcal{A}(m)}a^{2}_{J*j}=1$. Now by Propostion 3.4 in \cite{BH}, we have:
\[
\mathbb{P}(\|f(t)\|_{\infty, [0,1]}<\epsilon) \le C_{p}\epsilon^{p}+\min_{j=1,\cdots,n}\left\{ \mathbb{P}\left(\left\|\sum_{J \in \mathcal{A}(m)}a_{J*j}B^{J}_{t}\right\|_{\infty, [0,1]}<\epsilon^{q}\right) \right\}
\]
Note that since $\sum^{d}_{j=1}\sum_{J\in \mathcal{A}(m)}a^{2}_{J*j}=1$, there exists $1\le k \le d$ such that $\sum_{J\in \mathcal{A}(m)}a^{2}_{J*k}\ge \frac{1}{d}$. Therefore,
\begin{align*}
\mathbb{P}(\|f(t)\|_{\infty, [0,1]}<\epsilon)\le& C_{p}\epsilon^{p}+\mathbb{P}\left(\left\|\sum_{J \in \mathcal{A}(m)}a_{J*k}B^{J}_{t}\right\|_{\infty, [0,1]}< \epsilon^{q}\right)\\
                                             \le& C_{p}\epsilon^{p}+\mathbb{P}\left(\left\|\sum_{J\in \mathcal{A}(m)}\frac{a_{J*k}}{\sqrt{\sum a^{2}_{J*k}}}B^{J}_{t}\right\|_{\infty,[0,1]}
                                             < \frac{\epsilon^{q}}{\sqrt{\sum a^{2}_{J*k}}}\right)\\
                                              \le& C_{p}\epsilon^{p}+\mathbb{P}\left(\left\|\sum_{J\in \mathcal{A}(m)}\frac{a_{J*k}}{\sqrt{\sum a^{2}_{J*k}}}B^{J}_{t}\right\|_{\infty,[0,1]}
                                             < \sqrt{d}\epsilon^{q}\right)\\
                                             \le& C_{H,d,p}\epsilon^{p}
\end{align*}  
where the last inequality follows by the induction hypothesis. When $a_{\emptyset}\neq 0$, we repeat the argument in case $m=1$.\\
Now we turn to the irregular case when $1/4\le H\le 1/2$. For the base case $m=0$ or $m=1$, the same argument as in the regular case $H>1/2$ works. We 
just need the irregular version of the Norris lemma ( see Theorem 5.6 in \cite{CHLT}) to run the induction. Assume that the statement is true for 
$k=0,1,\cdots m$. Let $\displaystyle f(t)=\sum_{I\in \mathcal{A}_{1}(m+1)}a_{I}B^{I}_{t}$ with the restriction $\displaystyle \sum_{I\in \mathcal{A}_{1}(m+1)}a^{2}_{I}=1$.\\ 
We have:
\[
f(t)=\int^{t}_{0}A_sdB_{s} ,
\]
where $B_{t}=(B^{1}_{t}, \cdots, B^{d}_{t})$ and $A_{t}=(\sum_{J\in \mathcal{A}(m)}a_{J*1}B^{J}_{t}, \cdots,\sum_{J\in \mathcal{A}(m)}a_{J*d}B^{J}_{t})$. We pick $1 \le k \le d$\
such that $\sum_{J\in \mathcal{A}(m)}a^{2}_{J*k}\ge \frac{1}{d}$. Then by Theorem 5.6 in \cite{CHLT}, we have:
\[
\left\|\sum_{J\in \mathcal{A}(m)}a_{J*k}B^{J}_{t}\right\|_{\infty, [0,1]}\le MR^{q}\|f(t)\|^{r}_{\infty, [0,1]}
\]
Therefore we have:
\begin{align*}
\mathbb{P}(\|f\|_{\infty, [0,1]}< \epsilon)=&\mathbb{P}(\|f\|^{r}_{\infty, [0,1]}< \epsilon^{r})\\
					  \le&\mathbb{P}\left(\frac{\|\sum_{J\in \mathcal{A}(m)}a_{J*k}B^{J}_{t}\|_{\infty, [0,1]}}{MR^{q}}\le \epsilon^{r}\right)\\
					  \le&\mathbb{P}\left(\|\sum_{J\in \mathcal{A}(m)}a_{J*k}B^{J}_{t}\|_{\infty, [0,1]}\le \epsilon^{r/2}\right)+\mathbb{P}\left(MR^{q}\ge \epsilon^{-r/2}\right)\\
					  \le&\mathbb{P}\left(\left\|\sum_{J\in \mathcal{A}(m)}\frac{a_{J*k}}{\sqrt{\sum a^{2}_{J*k}}}B^{J}_{t}\right\|_{\infty, [0,1]}\le \sqrt{d}\epsilon^{r/2}\right)+ C_{p}\epsilon^{p}\\
					  \le&C_{H,d,p}\epsilon^{p}
\end{align*}
The last inequality follows from the induction hypothesis and the fact that $R$ has finite moment of all orders.
\end{proof}

\begin{corollary}\label{L2 norm}
For any $m\ge 0$ and $p>1$, we have 
\[
 \mathbb{E}\left[\inf\left\{\int^{1}_{0}(\sum_{I\in \mathcal{A}(m)}a_{I}B^{I}_{t})^{2}dt; \sum_{I\in \mathcal{A}(m)}a^{2}_{I}=1\right\}^{-p}\right]=C_{H,d,m,p}<\infty
\]

\end{corollary}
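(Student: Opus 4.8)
The plan is to rewrite the infimum as the smallest eigenvalue of a random Gram matrix, and then prove a small ball estimate for that eigenvalue by feeding the sup-norm bound of Lemma~\ref{sup_norm} through the $L^2$/sup/H\"older interpolation inequality recorded in Section~2, together with a covering argument on the unit sphere; this is the standard route (cf. \cite{Nu06}) from pointwise-in-direction small ball bounds to invertibility of a covariance-type matrix. Concretely, set $d' := \#\mathcal{A}(m) < \infty$ and, for $a = (a_I)_{I \in \mathcal{A}(m)} \in \mathbb{R}^{d'}$, let $f_a(t) := \sum_{I \in \mathcal{A}(m)} a_I B_t^I$. Then $\int_0^1 f_a(t)^2\,dt = a^\top G a$ with the symmetric nonnegative matrix $G = (G_{IJ})$, $G_{IJ} := \int_0^1 B_t^I B_t^J\,dt$, so the quantity inside the expectation is $\lambda_{\min}(G)$. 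Since $\mathbb{E}[\lambda_{\min}(G)^{-p}] = \int_0^\infty \mathbb{P}\big(\lambda_{\min}(G) < u^{-1/p}\big)\,du$, it is enough to show that for every $r \ge 1$ there is $C_r$ with $\mathbb{P}(\lambda_{\min}(G) < \epsilon) \le C_r \epsilon^{r}$ for all $\epsilon \in (0,1]$, and then take $r > p$.

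Two auxiliary inputs are needed. First, fix $\gamma \in (0,H)$; since each $B^I$ with $|I| \le m$ is a component of the step-$m$ signature of $B$, rough paths estimates for Gaussian rough paths (Theorem~10.41 in \cite{FV-bk}, \cite{CLL}; cf. Proposition~\ref{prop:moments-jacobian}) yield a random variable $R := 1 + \sum_{I \in \mathcal{A}(m)} \|B^I\|_\gamma^2$ with finite moments of all orders, for which $\|G\|_{\mathrm{op}} \le \mathrm{tr}\,G \le R$ and $\sup_{\sum_I a_I^2 = 1} \|f_a\|_\gamma \le R$. Second, the interpolation inequality $\|f\|_\infty \le 2\max\{\|f\|_{L^2}, \|f\|_{L^2}^{2\gamma/(2\gamma+1)} \|f\|_\gamma^{1/(2\gamma+1)}\}$ of Section~2 converts $L^2$-smallness into sup-smallness: for a fixed unit vector $a$, if $\|f_a\|_{L^2} < \sqrt\epsilon$ and $R \le \epsilon^{-\alpha}$ with $0 < \alpha < \gamma$, then $\|f_a\|_\infty \le 2\max\{\sqrt\epsilon,\epsilon^{(\gamma-\alpha)/(2\gamma+1)}\} \le 2\epsilon^{\theta}$ with the fixed exponent $\theta := \min\{\tfrac{1}{2},(\gamma-\alpha)/(2\gamma+1)\} > 0$. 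Combining this with Lemma~\ref{sup_norm} and Chebyshev's inequality, for every $s, q \ge 1$,
\[
\sup_{\sum_I a_I^2 = 1} \mathbb{P}\big(a^\top G a < \epsilon\big) \le \mathbb{P}\big(\|f_a\|_\infty \le 2\epsilon^\theta\big) + \mathbb{P}\big(R > \epsilon^{-\alpha}\big) \le C_{s}\,\epsilon^{\theta s} + \mathbb{E}[R^q]\,\epsilon^{\alpha q},
\]
and since $\theta,\alpha$ are fixed while $s,q$ are arbitrary, $\sup_{|a|=1}\mathbb{P}(a^\top G a < \epsilon) \le C_s \epsilon^{s}$ for every $s \ge 1$.

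Finally I upgrade this to the required bound on $\lambda_{\min}(G) = \inf_{|a|=1} a^\top G a$ by a net argument, which is the main technical point (routine given the previous step). Let $\{a^{(1)},\dots,a^{(N)}\}$ be a $\delta$-net of the unit sphere of $\mathbb{R}^{d'}$, so $N \le (3/\delta)^{d'}$. For a unit vector $a$ and $a^{(k)}$ with $|a - a^{(k)}| \le \delta$ one has $|a^\top G a - (a^{(k)})^\top G a^{(k)}| \le 2\delta\,\|G\|_{\mathrm{op}} \le 2\delta R$, hence $\lambda_{\min}(G) \ge \min_k (a^{(k)})^\top G a^{(k)} - 2\delta R$. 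Taking $\delta = \tfrac{1}{2}\epsilon^{1+\beta}$, on the event $\{R \le \epsilon^{-\beta}\}$ this gives $\{\lambda_{\min}(G) < \epsilon\} \subseteq \{\min_k (a^{(k)})^\top G a^{(k)} < 2\epsilon\}$, so by a union bound over the net and the previous paragraph,
\[
\mathbb{P}\big(\lambda_{\min}(G) < \epsilon\big) \le \mathbb{P}\big(R > \epsilon^{-\beta}\big) + N\,\sup_{|a|=1}\mathbb{P}\big(a^\top G a < 2\epsilon\big) \le \mathbb{E}[R^q]\,\epsilon^{\beta q} + (6\,\epsilon^{-1-\beta})^{d'}\,C_s\,(2\epsilon)^{s}.
\]
Choosing $q$ large enough that $\beta q > r$ and $s$ large enough that $s - d'(1+\beta) > r$ yields $\mathbb{P}(\lambda_{\min}(G) < \epsilon) \le C_r \epsilon^{r}$ for $\epsilon \in (0,1]$, which finishes the proof. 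The only genuine subtleties are that the interpolation inequality must run in the direction ``$L^2$ small $\Rightarrow$ sup small'' (whence the auxiliary control by $R$ and the mild loss of exponent, $\epsilon^\theta$ rather than $\epsilon$), and that the signature components $t \mapsto B_t^I$ carry H\"older norms with moments of all orders, which is the point where rough paths theory for Gaussian processes is invoked.
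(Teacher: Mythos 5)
Your proof is correct and follows essentially the same route as the paper: reduce to the uniform-in-direction small ball estimate $\sup_{|a|=1}\mathbb{P}(\|f_a\|_{L^2}^2<\epsilon)\le C_p\epsilon^p$, pass from $L^2$-smallness to sup-norm smallness via the interpolation inequality at the cost of controlling the H\"older norm (which has all moments), and conclude with Lemma~\ref{sup_norm}. The only difference is that you carry out the sphere-covering/smallest-eigenvalue step by hand, whereas the paper outsources exactly that step to Lemma~2.3.1 of \cite{Nu06}.
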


\begin{proof}
By Lemma 2.3.1 in \cite{Nu06}, we only need to show that for any $\epsilon>0$, there exists $C_{p}>0$ such that 
\[
\sup_{\sum_{I\in \mathcal{A}(m)}a^{2}_{I}=1}\mathbb{P}\left(\int^{1}_{0}\left(\sum_{I\in \mathcal{A}(m)}a_{I}B^{I}_{t}\right)^{2}dt<\epsilon\right)\le C_{p}\epsilon^{p}
\]
Let us denote that $f(t)=\sum_{I \in \mathcal(A)(m)}a_{I}B^{I}_{t}$. Then we have:
\[
\mathbb{P}\left(\int^{1}_{0}\left(\sum_{I\in \mathcal{A}(m)}a_{I}B^{I}_{t}\right)^{2}dt<\epsilon\right)=\mathbb{P}(\|f\|^{2}_{L^{2}}<\epsilon)=
\mathbb{P}(\|f\|_{L^{2}}<\sqrt{\epsilon})
\]
By using the  interpolation inequality
\[
\|f\|_{\infty}\le 2\max\{ \|f\|_{L^{2}}, \|f\|^{\frac{2r}{2r+1}}_{L^{2}}\|f\|^{\frac{1}{2r+1}}_{r}\} 
\]
we obtain:
\[
\{\|f\|_{L^{2}}< \sqrt{\epsilon}\}\subseteq \left\{ \frac{\|f\|_{\infty}}{2}<\sqrt{\epsilon}, \|f\|_{L^{2}}>\|f\|_{r}\right\}~\cup~
                                                      \left\{\left(\frac{\|f\|_{\infty}}{2\|f\|^{\frac{1}{2r+1}}_{r}}\right)
                                                      ^{\frac{2r+1}{2r}}<\sqrt{\epsilon},~\|f\|_{L^{2}}<\|f\|_{r}\right\}
\]

therfore we have:
\begin{align*}
\mathbb{P}(\|f\|_{L^{2}, [0,1]}<\sqrt{\epsilon})\le&\mathbb{P}(\|f\|_{\infty, [0,1]}<2\sqrt{\epsilon})+\mathbb{P}(\|f\|^{\frac{2r+1}{2r}}_{\infty, [0,1]}<\epsilon^{1/4})+
                                                        \mathbb{P}((2\|f\|^{\frac{1}{2r+1}}_{r})^{\frac{2r+1}{2r}} > \epsilon^{-1/4})\\
                                                   \le& \mathbb{P}(\|f\|_{\infty, [0,1]}<2\sqrt{\epsilon})+\mathbb{P}(\|f\|_{\infty, [0,1]}<\epsilon^{\frac{1}{4r+1}})+
                                                        \mathbb{P}(\|f\|_{r} >2^{-2r-1}\epsilon^{-r/2})
\end{align*}
Therefore, the result follows by Lemma \ref{sup_norm} and the fact that $\|f\|_{r}$ has finite moments of all orders.
\end{proof}

We can observe that thanks to  Corollary \ref{L2 norm}, we have for and $m\ge 0$, $p>1$ and $T,s>0$,
\[
 \mathbb{E}\left[\inf\left\{\int^{T}_{0}(\sum_{I\in \mathcal{A}(m)}a_{I}B^{I}_{t})^{2}dt; \sum_{I\in \mathcal{A}(m)}T^{2|I|H+1}a^{2}_{I}\ge s\right\}^{-p}\right]=C_{H,d,m,p}s^{-p}
\]

\begin{lemma}
Let $m\ge 0$ and $I\in \mathcal{A}(m)$, if $g^{\epsilon}_{I}: (0,1]^{2}\times \Omega \rightarrow \mathbb{R}$ is a continuous process such that:
\[
A_{p}=\sup_{T\in (0,1], \epsilon \in (0,1]}\mathbb{E}\left[\left(T^{-(m+1)H-1/2}\left(\sum_{I\in \mathcal{A}(m)}\int^{T}_{0}(g^{\epsilon}_{I}(t))^{2}dt\right)^{1/2}\right)^{p}\right]< \infty
\]

then 
\[
\mathbb{P}\left(\inf\{\left(\int^{T}_{0}(\sum_{I\in \mathcal{A}(m)}a_{I}(B^{I}_{t}+g^{\epsilon}_{I}(t)))^{2}dt\right)^{1/2}; 
\sum_{I\in \mathcal{A}(m)}T^{2|I|H+1}a^{2}_{I}=1\}\le z^{-1} \right) \le (4^{p}C_{H,d,m,p}+A_{2p})z^{-pr}
\]

for any $T\in (0,1]$ and $z\ge 1$,  $r=\frac{H}{(m+1/2)H+1/2}$ .
\end{lemma}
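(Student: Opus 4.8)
The plan is to compare the perturbed quadratic form $\int_0^T(\sum_I a_I(B^I_t+g^\epsilon_I(t)))^2\,dt$ with the unperturbed one $\int_0^T(\sum_I a_I B^I_t)^2\,dt$, for which we already have the quantitative non-degeneracy coming from Corollary \ref{L2 norm} (in its rescaled form displayed just above). The elementary inequality $(u+v)^2\ge \frac12 u^2-v^2$ gives, for any coefficients with $\sum_I T^{2|I|H+1}a_I^2=1$,
\[
\int_0^T\Big(\sum_{I\in\mathcal{A}(m)}a_I(B^I_t+g^\epsilon_I(t))\Big)^2dt\ge \frac12\int_0^T\Big(\sum_{I}a_I B^I_t\Big)^2dt-\int_0^T\Big(\sum_I a_I g^\epsilon_I(t)\Big)^2dt.
\]
By Cauchy--Schwarz the last term is bounded by $\big(\sum_I T^{-2|I|H-1}\big)\cdot\big(\sum_I T^{2|I|H+1}a_I^2\big)\cdot\sup_I T^{2|I|H+1}\int_0^T (g^\epsilon_I(t))^2\,dt$; more cleanly, one pairs $a_I$ against $g^\epsilon_I$ with weights $T^{|I|H+1/2}$ and $T^{-|I|H-1/2}$, so that the $a$-part is exactly the constraint and the $g$-part is controlled by $A_2$ (or $A_{2p}$) through the hypothesis. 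Thus on the event that $\sum_I\int_0^T(g^\epsilon_I(t))^2dt$ is at most a small multiple of $T^{2(m+1)H+1}$, the perturbed form is $\ge \frac14 \int_0^T(\sum_I a_I B^I_t)^2 dt$ uniformly over the admissible $a$'s.

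**Key steps.** First I would record the rescaled non-degeneracy estimate: from Corollary \ref{L2 norm}, for the normalization $\sum_I T^{2|I|H+1}a_I^2=1$ one has $\mathbb{E}[\inf\{\int_0^T(\sum_I a_I B^I_t)^2 dt\}^{-p}]=C_{H,d,m,p}T^{-p((2m+1)H+1)}$ — this is obtained by the Brownian/fBm scaling $B^I_{Tt}\overset{d}{=}T^{|I|H}B^I_t$ together with the change of variable $t\mapsto Tt$ in the integral, exactly as stated in the remark following Corollary \ref{L2 norm} with $s=1$. Second, split the probability according to whether the random quantity $G^\epsilon_T:=\sum_I\int_0^T(g^\epsilon_I(t))^2dt$ exceeds a threshold $\theta T^{2(m+1)H+1}$ or not. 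On $\{G^\epsilon_T>\theta T^{2(m+1)H+1}\}$ use Markov/Chebyshev with the $2p$-th moment hypothesis $A_{2p}<\infty$ to get a bound of order $\theta^{-p}A_{2p}$. On the complementary event, the comparison above shows the infimum is $\ge \frac14\inf\{\int_0^T(\sum_I a_I B^I_t)^2dt\}$, so the event $\{\inf(\cdots)^{1/2}\le z^{-1}\}$ is contained in $\{\inf\int_0^T(\sum a_I B^I_t)^2dt\le 16 z^{-2}\}$, whose probability is at most $16^p z^{-2p}\,\mathbb{E}[\inf(\cdots)^{-p}]=16^p C_{H,d,m,p}T^{-p((2m+1)H+1)}z^{-2p}$. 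Third, I would reconcile the exponent: the stated bound has $z^{-pr}$ with $r=\frac{H}{(m+1/2)H+1/2}$, which is strictly less than $2$ and absorbs the factor $T^{-p((2m+1)H+1)}$ since $T\le 1$ — one checks $z^{-2p}T^{-p(2(m+1/2)H+1)}\le z^{-pr}$ holds for $z\ge 1$ precisely because $z^{2p-pr}=z^{p(2-r)}\ge z^{p\cdot\frac{2(m+1/2)H+1}{(m+1/2)H+1/2}}\ge\dots$; more transparently, $2-r=\frac{(2m+1)H+1}{(m+1/2)H+1/2}$, so $z^{p(2-r)}\ge T^{-p((2m+1)H+1)}$ whenever $z\ge 1\ge T$, which is the needed trade-off. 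Choosing $\theta$ a harmless constant then yields the combined bound $(4^pC_{H,d,m,p}+A_{2p})z^{-pr}$.

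**Main obstacle.** The routine part is the $(u+v)^2\ge\frac12u^2-v^2$ comparison and the two-event decomposition; the delicate bookkeeping — and the step I expect to cause the most trouble — is matching the precise power of $z$ and the precise constant $4^p$ in the statement. One has to (i) get the scaling exponent in the rescaled Corollary right, (ii) balance the weights $T^{\pm(|I|H+1/2)}$ in the Cauchy--Schwarz splitting so that the constraint $\sum_I T^{2|I|H+1}a_I^2=1$ appears exactly and the normalizing exponent $(m+1)H+1/2$ in $A_p$ is the one that shows up, and (iii) verify that for $z\ge1$ and $T\in(0,1]$ the factor $T^{-p((2m+1)H+1)}$ is dominated by $z^{p(2-r)}$, which is exactly the identity $2-r=\frac{(2m+1)H+1}{(m+1/2)H+1/2}$ combined with $z\ge1\ge T$. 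Getting the constant down to $4^pC_{H,d,m,p}$ (rather than $16^p$) will require choosing the threshold $\theta$ and possibly the splitting constant in $(u+v)^2\ge\frac12u^2-v^2$ a bit more carefully, e.g. using $(u+v)^2\ge(1-\eta)u^2-\eta^{-1}v^2$ with $\eta$ tuned, but this is bookkeeping rather than a genuine difficulty.
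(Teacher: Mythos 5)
Your overall plan --- compare the perturbed quadratic form with the unperturbed one and control the perturbation through the moment hypothesis --- shares its opening inequality with the paper, but the decomposition you build on it cannot close, and the missing ingredient is precisely the paper's key idea. The hypothesis $A_{2p}<\infty$ only says that $G^{\epsilon}_{T}:=\sum_{I}\int_{0}^{T}(g^{\epsilon}_{I})^{2}\,dt$ is of order $T^{2(m+1)H+1}$ in $L^{2p}$; after your weighted Cauchy--Schwarz the perturbation term $\int_{0}^{T}(\sum_{I}a_{I}g^{\epsilon}_{I})^{2}\,dt\le T^{-2mH-1}G^{\epsilon}_{T}$ is therefore of order $T^{2H}$, i.e.\ of the \emph{same} order as the signal $\inf\int_{0}^{T}(\sum_{I}a_{I}B^{I}_{t})^{2}\,dt$. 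Indeed, under the normalization $\sum_{I}T^{2|I|H+1}a_{I}^{2}=1$ the self-similarity $B^{I}_{Tu}\overset{d}{=}T^{|I|H}B^{I}_{u}$ shows that this infimum has the same law as the $T=1$ infimum over the unit sphere, so $\mathbb{E}[\inf^{-p}]=C_{H,d,m,p}$ with \emph{no} $T$-dependence; your extra factor $T^{-p((2m+1)H+1)}$ is spurious, and the attempt to absorb it via ``$z\ge 1\ge T$'' is invalid in any case, since $z$ and $T$ are independent parameters (take $z=1$ and $T$ small). Because there is no smallness of the perturbation relative to the signal on $[0,T]$, your two-event split fails: to make the Markov term $\mathbb{P}(G^{\epsilon}_{T}>\theta T^{2(m+1)H+1})\le \theta^{-p}A_{2p}$ decay like $z^{-pr}$ you must let $\theta$ grow like a positive power of $z$, and then on the complementary event the perturbation bound $\theta T^{2H}$ blows up with $z$ and dominates the signal, so the comparison yields nothing. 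The fact that the lemma claims only $z^{-pr}$ with $r<2$, rather than the $z^{-2p}$ a direct argument would give, is itself a sign that a loss is unavoidable.

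That loss comes from the step you are missing: time localization. The paper first bounds the $L^{2}$ norm on $[0,T]$ from below by the $L^{2}$ norm on the shorter interval $[0,T/y]$, $y\ge 1$. On $[0,T/y]$ the perturbation is of order $(T/y)^{2(m+1)H+1}T^{-2mH-1}\le T^{2H}y^{-2(m+1)H-1}$, hence genuinely small for large $y$, while the non-degeneracy of $\int_{0}^{T/y}(\sum_{I}a_{I}B^{I}_{t})^{2}\,dt$ under the constraint $\sum_{I}T^{2|I|H+1}a_{I}^{2}=1$ degrades only polynomially in $y$, by the rescaled form of Corollary \ref{L2 norm} applied with $s=y^{-(2mH+1)}$. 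Balancing the two polynomial rates through the choice $z=y^{(m+1/2)H+1/2}$ makes both error terms equal to $y^{-Hp}=z^{-rp}$, which is exactly where the exponent $r=H/((m+1/2)H+1/2)$ and the constant $4^{p}C_{H,d,m,p}+A_{2p}$ come from. Without this interval-shrinking the statement is out of reach by your method.
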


\begin{proof}
For any $T\in (0,1]$ and  $y\ge 1$, we have 
\begin{align*}
 &\left(\int^{T}_{0}\left(\sum_{I\in \mathcal{A}(m)}a_{I}(B^{I}_{t}+g^{\epsilon}_{I}(t))\right)^{2}dt\right)^{1/2}\\
 &\ge\left(\int^{T/y}_{0}\left(\sum_{I\in \mathcal{A}(m)}a_{I}(B^{I}_{t}+g^{\epsilon}_{I}(t))\right)^{2}dt\right)^{1/2}\\
 &\ge\left(\int^{T/y}_{0}\left(\sum_{I\in \mathcal{A}(m)}a_{I}B^{I}_{t}\right)^{2}dt\right)^{1/2}-\left(\sum_{I\in \mathcal{A}(m)}T^{2|I|H+1}a^{2}_{I}\right)^{1/2}
                                                                          \left(T^{-(2mH+1)}\sum_{I\in \mathcal{A}(m)}\int^{	T/y}g^{\epsilon}_{I}(t)^{2}dt\right)^{1/2}                                                              
\end{align*}
Now let us pick $z=y^{(m+1/2)H+1/2}$, we have
\begin{align*}
&\mathbb{P}\left(\inf\{\left(\int^{T}_{0}(\sum_{I\in \mathcal{A}(m)}a_{I}(B^{I}_{t}+g^{\epsilon}_{I}(t)))^{2}dt\right)^{1/2}; 
\sum_{I\in \mathcal{A}(m)}T^{2|I|H+1}a^{2}_{I}=1\}\le z^{-1} \right)\\
&\le \mathbb{P}\left(\inf\{\left(\int^{T/y}_{0}(\sum_{I\in \mathcal{A}(m)}a_{I}B^{I}_{t})^{2}dt\right)^{1/2}; 
\sum_{I\in \mathcal{A}(m)}T^{2|I|H+1}a^{2}_{I}=1\}\le 2z^{-1} \right)\\
& +\mathbb{P}\left(T^{-(2mH+1)/2}\left(\sum_{I\in \mathcal{A}(m)}\int^{T/y}(g^{\epsilon}_{I}(t))^{2}dt\right)^{1/2} \ge z^{-1}\right)\\
& \le \mathbb{P}\left(\inf\{\int^{T/y}_{0}(\sum_{I\in \mathcal{A}(m)}a_{I}B^{I}_{t})^{2}dt; 
\sum_{I\in \mathcal{A}(m)}(T/y)^{2|I|H+1}a^{2}_{I}\ge y^{-(2mH+1)}\}\le 4z^{-2} \right)\\
& +\mathbb{P}\left((T/y)^{-(m+1)H-1/2}\left(\sum_{I\in \mathcal{A}(m)}\int^{T/y}(g^{\epsilon}_{I}(t))^{2}dt\right)^{1/2} \ge y^{(m+1)H+1/2}z^{-1}\right)\\
& \le \left(4z^{-2}y^{2mH+1}\right)^{p}C_{m,n,p}+\left(y^{-(m+1)H-1/2}z\right)^{2p}A_{2p}\\
& \le \left(4^{p}C_{H,d,m,p}+A_{2p}\right)y^{-Hp}\\
& \le\left(4^{p}C_{H,d,m,p}+A_{2p}\right)z^{-rp}
\end{align*}
\end{proof}

Now, by  applying the above lemma with $m=l-1$ and Lemma \ref{Taylor expansion of beta}, we obtain the following corollary: 

\begin{corollary}\label{L_2 lower bound}
For any $p \ge 1$ and $\delta>0$, there exists a constant $C_{p}$ such that 
\[
 \mathbb{P}\left(\inf\left\{\sum_{I,J\in \mathcal{A}_{1}(l)}\int^{t}_{0}t^{-(|I|+|J|-2)H+1}a_{I}a_{J}\langle \beta^{I, \epsilon}(s,x), \beta^{J, \epsilon}(s,x)\rangle_{\mathbb{R}^{d}}ds; 
 \sum_{I\in \mathcal{A}_{1}(l)}|a_{I}|^{2}=1\right\}\le \delta\right)\le C_{p}\delta^{p}
\]
for any $\epsilon \in (0, 1]$ and any $x \in \mathbb{R}^{n}$.
\end{corollary}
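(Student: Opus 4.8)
The plan is to deduce Corollary~\ref{L_2 lower bound} from the preceding lemma applied with $m=l-1$, after replacing the iterated integrals $B^I_t$ by the processes $\beta^{I,\epsilon}_i(t,x)$ via the Taylor expansion of Lemma~\ref{Taylor expansion of beta}. Recall that for $I\in\mathcal{A}_1(l)$ and each coordinate $i$, Lemma~\ref{Taylor expansion of beta} gives a decomposition $\beta^{I,\epsilon}_i(t,x)=\sum_{L\in\mathcal{A}}\delta^{\cdot}_{\cdot}(-1)^{|L|}B^{L}_t+\gamma^{\epsilon}_{\cdot}(t,x)$; collecting terms by word length, this writes each $\beta^{I,\epsilon}_i(t,x)$ as a linear combination (with coefficients that are $\pm1$, hence bounded uniformly in $\epsilon$) of the family $\{B^L_t : L\in\mathcal{A}(l-1)\}$, plus a remainder $g^{\epsilon}_{\cdot}(t,x)$ which is exactly the kind of process $\gamma^{\epsilon}$ controlled in Lemma~\ref{Taylor expansion of beta}. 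Thus the quadratic form $\sum_{I,J}a_Ia_J\langle\beta^{I,\epsilon}(s,x),\beta^{J,\epsilon}(s,x)\rangle_{\mathbb{R}^d}$, after the rescaling $t^{-(|I|+|J|-2)H+1}$ is absorbed into the $a_I$'s, takes the form $\int_0^t(\sum_{L\in\mathcal{A}(l-1)}b_L(B^L_s+g^{\epsilon}_L(s)))^2\,ds$ for suitable $b_L$ depending linearly on the $a_I$'s.

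First I would make the change of variables precise: set $a_I'=t^{(|I|-1)H+1/2}a_I$, so that the constraint $\sum_I|a_I|^2=1$ becomes, up to constants depending only on $t\in(0,1]$ and bounded below, comparable to $\sum_I t^{2|I|H+1}|a_I'|^2\asymp 1$ — here one has to be a little careful, but since $t\le1$ the rescaling only shrinks norms and the infimum one wants to bound below is only made smaller, so it suffices to bound the infimum over the constraint $\sum_I t^{2|I|H+1}|a_I|^2=1$ in the notation of the previous lemma. Then I would verify that the ``drift'' processes $g^{\epsilon}_L(t)$ arising from the $\gamma^{\epsilon}$ remainders satisfy the hypothesis $A_p<\infty$ of the previous lemma: this is immediate from Lemma~\ref{Taylor expansion of beta}, because each $\gamma^{\epsilon,J}_I(t,x)$ is bounded by $C\,t^{(l+1-|I|)H}\|\cdot\|$ with $C\in L^p$ uniformly in $x,\epsilon$, and $(l+1-|I|)H\ge H > 0$ gives more than the $t^{(m+1)H+1/2}=t^{(l-1/2)H+1/2}$ decay required (after integrating $\int_0^T(g^{\epsilon}_L)^2\,ds\lesssim T\cdot T^{2(l+1-|I|)H}$ and taking square roots), uniformly in $\epsilon\in(0,1]$.

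With these two checks in place, the previous lemma directly yields $\mathbb{P}(\,\inf\{\cdots\}\le z^{-1}\,)\le C_p z^{-pr}$ with $r=H/((l-1/2)H+1/2)>0$, for every $t\in(0,1]$, $\epsilon\in(0,1]$, $x\in\mathbb{R}^n$. Finally I would translate this tail bound into the stated form: given $\delta>0$ set $z=\delta^{-1}\ge 1$ (the range $\delta\ge1$ being trivial since the probability is then bounded by $1\le\delta^p$ after adjusting constants), obtaining $\mathbb{P}(\inf\le\delta)\le C_p\delta^{pr}$; since $r>0$ is a fixed positive constant, replacing $p$ by $p/r$ (which is again an arbitrary element of $(1,\infty)$, or $\ge 1$ after enlarging constants) gives the claimed $C_p\delta^p$. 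The uniformity in $x$ and $\epsilon$ is inherited at every stage because all the constants produced by Lemma~\ref{Taylor expansion of beta}, Corollary~\ref{L2 norm} and the previous lemma are uniform in $x\in\mathbb{R}^n$ and $\epsilon\in(0,1]$.

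The main obstacle I anticipate is the bookkeeping in the change of variables relating the constraint $\sum_I|a_I|^2=1$ to the constraint $\sum_I t^{2|I|H+1}|a_I|^2=1$ of the previous lemma, and in checking that the linear map sending $(a_I)_{I\in\mathcal{A}_1(l)}$ to the coefficients $(b_L)_{L\in\mathcal{A}(l-1)}$ of the expansion is nondegenerate in the right quantitative sense (so that a lower bound on $\sum_L b_L^2$-weighted quantities transfers back to a lower bound under $\sum_I|a_I|^2=1$); because the leading term of $\beta^{I,\epsilon}_i$ is $(-1)^{|K|}B^K$ when $I\ast K$ indexes the $i$-th coordinate direction, this map is, up to the harmless $\pm1$ signs and the rescaling powers of $t\le1$, essentially a permutation-type inclusion, so the degeneracy is only of the benign ``powers of $t\le 1$'' type which, as noted, can only help the lower bound. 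Everything else is a direct invocation of the already-proved lemmas.
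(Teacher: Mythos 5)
Your proposal is correct and follows essentially the same route as the paper, which simply invokes the preceding lemma with $m=l-1$ together with Lemma \ref{Taylor expansion of beta}; your write-up supplies the bookkeeping (identifying the leading iterated-integral part of $\beta^{I,\epsilon}_j$, checking the $A_p$ hypothesis from the $t^{(l+1-|I|)H}$ bound on $\gamma$, rescaling the constraint, and trading the exponent $r$ for $p$) that the paper leaves implicit. The only cosmetic point is that the drift $\sum_I a_I\gamma^{\epsilon,I}_{(j)}$ is not literally of the form $\sum_K b_K g^{\epsilon}_K$ demanded by the lemma, but the same Cauchy--Schwarz step in the lemma's proof absorbs it, so this is not a gap.
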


We are finally in position to give the proof of Theorem \ref{M inverse bound}. First, let us recall that $M^{\epsilon}_{I,J}(x)=\langle \beta^{I,\epsilon}(\cdot, x), \beta^{J,\epsilon}(\cdot, x)\rangle_{\mathcal{H}}$. 
We separate the case $1/4<H \le 1/2$ and $H>1/2$, since we are using different interpolation inequalities for each case. When $1/4<H\le 1/2$, for any $a\in \mathbb{R}^{\mathcal{A}_{1}(l)}$ we have:
\begin{align*}
\sum_{I,J\in \mathcal{A}_{1}(l)}a_{I}a_{J}M^{\epsilon}_{I,J}(x)&=\sum_{j=1}^{d}\|\sum_{I\in \mathcal{A}_{1}(l)}a_{I}\beta_{j}^{I,\epsilon}(\cdot, x)\|^{2}_{\mathcal{H}}\\
                                                     &\ge C_{H}\sum^{d}_{j=1}\int^{1}_{0}(\sum_{I\in \mathcal{A}_{1}(l)}a_{I}\beta_{j}^{I,\epsilon}(t,x))^{2}dt\\
						       &= C_{H}\sum_{I,J\in \mathcal{A}_{1}(l)}\int^{1}_{0}a_{I}a_{J}\langle \beta^{I,\epsilon}(t,x), \beta^{J,\epsilon}(t,x)\rangle_{\mathbb{R}^{d}}dt
\end{align*}                         
Therefore we conclude that: 
\[
 \mathbb{P}\left(\inf\left\{\sum_{I,J\in \mathcal{A}_{1}(l)}a_{I}a_{J}M^{\epsilon}_{I,J}(x); \sum_{I\in \mathcal{A}_{1}(l)}|a_{I}|^{2}=1\right\}\le \delta\right)\le C_{p,H}\delta^{p},
\]
by applying the Corollary \ref{L_2 lower bound} above when $t=1$. Now we turn to the case when $H>1/2$. To simpify the notation, let us denote 
$f_{j}=\sum_{I\in \mathcal{A}_{1}(l)}a_{I}\beta^{I, \epsilon}_{j}(t,x)$. Applying the interpolation inequality (\ref{interpolation H big}) and note that $\|f_j\|_\infty\geq \|f_j\|_{L^2}$ on the interval $[0,1]$, we have:
\begin{align*}
\sum_{I,J\in \mathcal{A}_{1}(l)}a_{I}a_{J}M^{\epsilon}_{I,J}(x)&=\sum_{j=1}^{d}\|\sum_{I\in \mathcal{A}_{1}(l)}a_{I}\beta_{j}^{I,\epsilon}(\cdot, x)\|^{2}_{\mathcal{H}}\\
                                                               &\ge C_{H}\sum_{j=1}^{d}\left(\frac{\|f_{j}\|^{3+1/\gamma}_{L^2}}{\|f_{j}\|^{2+1/\gamma}_{\gamma}}\right)^{2}\\
                                                               &\ge \frac{C_{H}\sum^{d}_{j=1}\|f_{j}\|^{6+2/\gamma}_{L^2}}{\max_{j=1,\cdots,d}\|f_{j}\|^{4+2/\gamma}_{\gamma}}\\
                                                               &\ge \frac{C_{H}d^{-2-1/\gamma}(\sum^{d}_{j=1}\|f_{j}\|^{2}_{L^2})^{3+1/\gamma}}{\max_{j=1,\cdots,d}\|f_{j}\|^{4+2/\gamma}_{\gamma}}\\
                                                               &=\frac{C_{d,H}\left(\sum_{I,J\in \mathcal{A}_{1}(l)}\int^{1}_{0}a_{I}a_{J}\langle \beta^{I,\epsilon}(t,x), \beta^{J,\epsilon}(t,x)
                                                               \rangle_{\mathbb{R}^{d}}dt\right)^{3+1/\gamma}}{\max_{j=1,\cdots,d}\|f_{j}\|^{4+2/\gamma}_{\gamma}}
\end{align*}
Then we have:
\begin{align*}
&\mathbb{P}\left(\inf\left\{\sum_{I,J\in \mathcal{A}_{1}(l)}a_{I}a_{J}M^{\epsilon}_{I,J}(x); \sum_{I\in \mathcal{A}_{1}(l)}|a_{I}|^{2}=1\right\}\le \delta\right)\\
&\le\mathbb{P}\left(\inf\left\{\sum_{I,J\in \mathcal{A}_{1}(l)}\int^{1}_{0}a_{I}a_{J}\langle \beta^{I,\epsilon}(t,x), \beta^{J,\epsilon}(t,x)\rangle_{\mathbb{R}^{d}}dt; 
\sum_{I\in \mathcal{A}_{1}(l)}|a_{I}|^{2}=1\right\}\le \left(\frac{\delta^{1/2}}{C_{d,H}}\right)^{1/(3+1/\gamma)}\right)\\
&+\mathbb{P}\left(\inf\left\{\max_{j=1,\cdots,d}\|f_{j}\|^{4+2/\gamma}_{\gamma};\sum_{I\in \mathcal{A}_{1}(l)}|a_{I}|^{2}=1\right\}\ge \delta^{-1/2}\right)
\end{align*}
and the result follows by chosing $t=1$ in Corollary \ref{L_2 lower bound} and by the fact that $\|f_{j}\|_{\gamma}$ has finite moment of all orders.

\section{Integration by parts formula}

In this section, we will the integration by parts formula which leads to our main result. 
\begin{proposition}
 For any $f \in C^{\infty}_{b}(\mathbb{R}^{n}, \mathbb{R})$, $\epsilon \in (0,1]$ and $x\in \mathbb{R}^{n}$, we have 
 \[
 V^{\epsilon}_{[I]}f(X^{\epsilon, x}_{1})=\sum_{J\in \mathcal{A}_{1}(l)}(M^{\epsilon}_{I,J}(x))^{-1}D^{(J)}f(X^{\epsilon,x}_{1})
 \]

\end{proposition}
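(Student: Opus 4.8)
The plan is to read the statement with ``$V^{\epsilon}_{[I]}f(X^{\epsilon,x}_{1})$'' understood as the result of applying the first-order operator $V^{\epsilon}_{[I]}$ to the (almost surely smooth, random) map $y\mapsto f(X^{\epsilon,y}_{1})$; since $\partial_{x}X^{\epsilon,x}_{1}=J^{\epsilon}_{0\to1}$, the chain rule gives $V^{\epsilon}_{[I]}f(X^{\epsilon,x}_{1})=\langle\nabla f(X^{\epsilon,x}_{1}),\,J^{\epsilon}_{0\to1}V^{\epsilon}_{[I]}(x)\rangle_{\mathbb{R}^{n}}$, a quantity I will abbreviate $\mathfrak{d}_{I}$ and whose expectation is precisely $V^{\epsilon}_{[I]}P^{\epsilon}_{1}f(x)$. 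With this reading the proof is a three-step unwinding of definitions: express $\mathbf{D}f(X^{\epsilon,x}_{1})$ in terms of the processes $\beta^{K,\epsilon}$, contract against $\beta^{J,\epsilon}$ to produce the matrix $M^{\epsilon}$, and invert.

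The first step is to compute the Malliavin derivative of $f(X^{\epsilon,x}_{1})$. Applying Proposition \ref{prop:deriv-sde} to the rescaled equation \eqref{scaled sde} (the rescaled vector fields $V^{\epsilon}_{i}=\epsilon^{H}V_{i}$ are still $C^{\infty}$-bounded, so the proposition applies verbatim) together with the chain rule for $\mathbf{D}$, one gets $\mathbf{D}^{j}_{s}f(X^{\epsilon,x}_{1})=\langle\nabla f(X^{\epsilon,x}_{1}),\,J^{\epsilon}_{s\to1}V^{\epsilon}_{j}(X^{\epsilon,x}_{s})\rangle$ for $0\le s\le1$. Writing $J^{\epsilon}_{s\to1}=J^{\epsilon}_{0\to1}(J^{\epsilon}_{0\to s})^{-1}$ and noting that $V^{\epsilon}_{j}=V^{\epsilon}_{[(j)]}$ is the bracket attached to the length-one word $(j)\in\mathcal{A}_{1}(l)$, Lemma \ref{jacobian_inverse_beta} yields $(J^{\epsilon}_{0\to s})^{-1}V^{\epsilon}_{j}(X^{\epsilon,x}_{s})=\sum_{K\in\mathcal{A}_{1}(l)}\beta^{K,\epsilon}_{(j)}(s,x)V^{\epsilon}_{[K]}(x)$, where $\beta^{K,\epsilon}_{(j)}$ denotes the $j$-th coordinate of the $\mathbb{R}^{d}$-valued process $\beta^{K,\epsilon}(\cdot,x)$. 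Substituting and recognising $\langle\nabla f(X^{\epsilon,x}_{1}),J^{\epsilon}_{0\to1}V^{\epsilon}_{[K]}(x)\rangle=\mathfrak{d}_{K}$ gives the exact identity (no remainder term, unlike in Lemma \ref{Taylor expansion of beta})
\[
\mathbf{D}_{\cdot}f(X^{\epsilon,x}_{1})=\sum_{K\in\mathcal{A}_{1}(l)}\mathfrak{d}_{K}\,\beta^{K,\epsilon}(\cdot,x)\,\mathbf{1}_{[0,1]}(\cdot)\qquad\text{in }\mathcal{H}.
\]

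Substituting this into $D^{(J)}f(X^{\epsilon,x}_{1})=\langle\mathbf{D}_{\cdot}f(X^{\epsilon,x}_{1}),\beta^{J,\epsilon}(\cdot,x)\mathbf{1}_{[0,1]}\rangle_{\mathcal{H}}$ and using bilinearity of $\langle\cdot,\cdot\rangle_{\mathcal{H}}$ — the scalars $\mathfrak{d}_{K}$ pull out since they do not depend on the time variable — we obtain $D^{(J)}f(X^{\epsilon,x}_{1})=\sum_{K\in\mathcal{A}_{1}(l)}M^{\epsilon}_{K,J}(x)\,\mathfrak{d}_{K}$ directly from the definition of $M^{\epsilon}_{K,J}(x)$. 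The matrix $(M^{\epsilon}_{I,J}(x))_{I,J\in\mathcal{A}_{1}(l)}$ is symmetric and, by Theorem \ref{M inverse bound}, almost surely invertible (it has finite negative moments of all orders); hence $\mathfrak{d}_{I}=\sum_{J\in\mathcal{A}_{1}(l)}(M^{\epsilon}_{I,J}(x))^{-1}D^{(J)}f(X^{\epsilon,x}_{1})$, which is the claimed formula.

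I do not anticipate a genuine obstacle: once Lemma \ref{jacobian_inverse_beta} and Proposition \ref{prop:deriv-sde} are available the identity is mechanical. The points needing a word of care are (i) fixing the meaning of $V^{\epsilon}_{[I]}f(X^{\epsilon,x}_{1})$ as the derivative of the pathwise flow map along $V^{\epsilon}_{[I]}$, which is what makes $\mathbb{E}$ of it equal to $V^{\epsilon}_{[I]}P^{\epsilon}_{1}f(x)$; (ii) the legitimacy of every $\mathcal{H}$-pairing used, which is already implicit in the definitions of $D^{(J)}$ and $M^{\epsilon}$ and rests on $f(X^{\epsilon,x}_{1})\in\mathbb{D}^{\infty}$ together with the embeddings $\mathbf{L}^{1/H}([0,1])\subset\mathcal{H}$ for $H>1/2$ and $\mathcal{H}\subset\mathbf{L}^{2}([0,1])$ for $H\le1/2$; and (iii) that only the qualitative almost-sure invertibility of $M^{\epsilon}(x)$ is needed here, the quantitative moment bound of Theorem \ref{M inverse bound} being exploited only in the subsequent integration-by-parts estimates.
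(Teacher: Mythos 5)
Your proof is correct and follows essentially the same route as the paper's: compute $\mathbf{D}^{j}_{s}f(X^{\epsilon,x}_{1})$ via the chain rule and Proposition \ref{prop:deriv-sde}, rewrite $(J^{\epsilon}_{0\to s})^{-1}V^{\epsilon}_{j}(X^{\epsilon,x}_{s})$ with Lemma \ref{jacobian_inverse_beta}, pair against $\beta^{J,\epsilon}$ to produce $M^{\epsilon}$, and invert. Your added remarks on the meaning of $V^{\epsilon}_{[I]}f(X^{\epsilon,x}_{1})$ and on needing only the almost-sure invertibility of $M^{\epsilon}(x)$ from Theorem \ref{M inverse bound} are accurate clarifications of points the paper leaves implicit.
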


\begin{proof}
First note that by chain rule together with Lemma \ref{jacobian_inverse_beta} we have:
\begin{align*}
 \mathbf{D}^{j}_{t}f(X^{\epsilon, x}_{1})=&\langle \nabla{f(X^{\epsilon, x}_{1})}, \mathbf{D}^{j}_{t}X^{\epsilon, x}_{1}\rangle_{\mathbb{R}^{n}}\\
                  =&\langle \nabla{f(X^{\epsilon, x}_{1})}, J^{\epsilon}_{0\rightarrow 1}(J^{\epsilon}_{0\rightarrow t})^{-1}
                  V^{\epsilon}_{j}(X^{\epsilon, x}_{s})\rangle_{\mathbb{R}^{n}}\\
                  =&\langle \nabla{f(X^{\epsilon,x}_{t})}, J^{\epsilon}_{0\rightarrow 1}(\sum_{I\in \mathcal{A}_{1}(l)}
                  \beta^{I, \epsilon}_{j}(t,x)V^{\epsilon}_{[I]}(x))\rangle_{\mathbb{R}^{n}}\\
                  =&\langle \nabla{f(X^{\epsilon,x}_{t})}, \sum_{I\in \mathcal{A}_{1}(l)}\beta^{I, \epsilon}_{j}(t,x)
                  J^{\epsilon}_{0\rightarrow 1}V^{\epsilon}_{[I]}(x)\rangle_{\mathbb{R}^{n}}\\
                  =&\sum_{I\in \mathcal{A}_{1}(l)}\beta^{I,\epsilon}_{j}(t,x)V^{\epsilon}_{[I]}f(X^{\epsilon, x}_t)
\end{align*}
Now for $J\in \mathcal{A}_{1}(l)$, by definition, we have:
\begin{align*}
 D^{(J)}f(X^{\epsilon, x}_1)=&\langle \mathbf{D}_{\cdot}f(X^{\epsilon, x}_{1}), ~\beta^{J, \epsilon}(\cdot,~ x)\rangle_{\mathcal{H}}\\
              =&\langle \sum_{I\in \mathcal{A}_{1}(l)}\beta^{I, \epsilon}(\cdot, x)V^{\epsilon}_{[I]}f(X^{\epsilon, x}_{1}), ~\beta^{J,\epsilon}(\cdot, x)
              \rangle_{\mathcal{H}}\\
              =&\sum_{I\in \mathcal{A}_{1}(l)} V^{\epsilon}_{[I]}f(X^{\epsilon, x}_1)\langle \beta^{I,\epsilon}(\cdot,~x), \beta^{J,\epsilon}(\cdot,~x)\rangle_{\mathcal{H}}\\
              =&\sum_{I\in \mathcal{A}_{1}(l)}M^{\epsilon}_{I,J}(x)V^{\epsilon}_{[I]}f(X^{\epsilon, x}_{1})
\end{align*}
Hence we conclude
\[
 V^{\epsilon}_{[I]}f(X^{\epsilon, x}_{1})=\sum_{J\in \mathcal{A}_{1}(l)}(M^{\epsilon}_{I,J}(x))^{-1}D^{(J)}f(X^{\epsilon, x}_{1})
 \]
\end{proof}

Let us introduce the following definition:
\begin{definition}
We denote by $\mathcal{K}$ the set of mappings 
 $\Phi(\epsilon, x): (0,1]\times \mathbb{R}^{n}\rightarrow \mathbb{D}^{\infty}$ that satisfies the following conditions:
 \begin{enumerate}
  \item $\Phi(\epsilon,x)$ is smooth in x and $\frac{\partial^{|\nu|}\Phi}{\partial x^{\nu}}(\epsilon, x)$ is continues in $(\epsilon,x)\in (0,1]\times \mathbb{R}^{n}$with 
  probability one for any muti-index $\nu$;
  \item For any $k,p>1$ and multi-index $\nu$ we have:
  \[
  \sup_{\epsilon \in(0,1]} \left\|\frac{\partial^{|\nu|}\Phi}{\partial^{\nu}x}(\epsilon,x)\right\|_{\mathbb{D}^{k,p}} < \infty.
  \]
  \end{enumerate}
\end{definition}

\begin{lemma}

\

 \begin{enumerate}
  \item $\beta^{J, \epsilon}_{I}(1,x)\in \mathcal{K}$ for any $I, J\in \mathcal{A}_{1}(l)$.
  \item $(M^{\epsilon}_{I,J}(x))^{-1}\in \mathcal{K}$ for any $I, J\in \mathcal{A}_{1}(l)$.
  \item $\Psi_{I}(\epsilon,t,x)=\sum_{J\in \mathcal{A}_{1}(l)}\beta^{J,\epsilon}(t,x)(M^{\epsilon}_{I,J}(x))^{-1}\in \mathcal{K}$.
 \end{enumerate}
 \end{lemma}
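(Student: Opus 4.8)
The plan is to derive all three statements from two structural facts. The first is that the coupled process $\Xi^\epsilon_t=(X^{\epsilon,x}_t,\,J^\epsilon_{0\to t},\,(J^\epsilon_{0\to t})^{-1},\,(\beta^{J,\epsilon}_I(t,x))_{I,J\in\mathcal{A}_1(l)})$ solves a single rough differential equation driven by $B$ whose coefficient vector fields are $C^\infty$-bounded with bounds \emph{independent of} $\epsilon\in(0,1]$; this uniformity is the essential point and comes from the fact that the rescaled fields are $\epsilon^HV_i$ with $\epsilon^H\le1$, while each function $\omega^{K,\epsilon}_{I\ast j}$ entering \eqref{beta} is either $\delta^K_{I\ast j}$ or carries a factor $\epsilon^{(|I\ast j|-|K|)H}$ with $|K|\le l<|I\ast j|$, hence is bounded by $1$ in $C^\infty_b$-norm uniformly in $\epsilon$. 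The second fact is that for such equations the iterated Malliavin derivatives $\mathbf{D}^m\Xi^\epsilon$ again solve linear RDEs assembled from $\partial V^\epsilon$, the $\omega^{K,\epsilon}$'s and $\Xi^\epsilon$ itself, so the moment estimates behind Proposition \ref{prop:moments-jacobian} (see \cite{CLL}), applied to this enlarged system, yield $\sup_{\epsilon\in(0,1],\,x}\mathbb{E}(\|\mathbf{D}^m\Xi^\epsilon_t\|^p)<\infty$ for all $m,p$, uniformly also in $t\in(0,1]$.

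With this in hand claim (1) follows: smoothness of $x\mapsto\beta^{J,\epsilon}_I(1,x)$ and joint continuity in $(\epsilon,x)$ of all its $x$-derivatives come from the differentiability of RDE flows with respect to the starting point and to parameters (the $x$-derivatives solve linear RDEs with coefficients continuous in $(\epsilon,x)$), see \cite{FV-bk}, while the uniform $\mathbb{D}^{k,p}$-bounds are exactly the second fact above. The same argument gives $\beta^{J,\epsilon}(t,x)\in\mathcal{K}$ with estimates uniform in $t\in(0,1]$, and it shows that the entries $M^\epsilon_{I,J}(x)$ belong to $\mathcal{K}$: these are continuous bilinear functionals of the paths $\beta^{I,\epsilon}(\cdot,x)$ in $\mathcal{H}$ (a genuine weighted double integral when $H>1/2$, and a bounded bilinear form on $\mathcal{H}\subset L^2$ when $H\le1/2$), so $\partial^\nu_x\mathbf{D}^mM^\epsilon_{I,J}$ is controlled by the $\partial^\nu_x\mathbf{D}^j\beta$'s through the Leibniz rule and Hölder's inequality.

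For claim (2) I would combine the previous step with Theorem \ref{M inverse bound}. The matrix $M^\epsilon(x)$ is symmetric, nonnegative and almost surely invertible, so $\|(M^\epsilon(x))^{-1}\|$ equals $\lambda_{\min}(M^\epsilon(x))^{-1}$, and Theorem \ref{M inverse bound} gives $\sup_{\epsilon,x}\mathbb{E}(\|(M^\epsilon(x))^{-1}\|^p)<\infty$ for every $p$; in particular each entry of $(M^\epsilon(x))^{-1}$ has uniformly bounded moments of all orders. To pass to the Malliavin--Sobolev norms one differentiates the identity $\mathbf{D}(M^\epsilon)^{-1}=-(M^\epsilon)^{-1}(\mathbf{D}M^\epsilon)(M^\epsilon)^{-1}$ repeatedly: $\mathbf{D}^m(M^\epsilon)^{-1}$ is a finite sum of products of factors $(M^\epsilon)^{-1}$ and $\mathbf{D}^jM^\epsilon$ with $1\le j\le m$, and Hölder's inequality together with the uniform moment bounds on $(M^\epsilon)^{-1}$ (Theorem \ref{M inverse bound}) and on the $\mathbf{D}^jM^\epsilon$ (claim (1)) yields $\sup_\epsilon\|(M^\epsilon_{I,J}(x))^{-1}\|_{\mathbb{D}^{k,p}}<\infty$ for all $k,p$. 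Smoothness in $x$ and continuity of the $x$-derivatives in $(\epsilon,x)$ follow from those of $M^\epsilon(x)$ and the smoothness of matrix inversion on the open set of invertible matrices.

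Claim (3) then reduces to the fact that $\mathcal{K}$ is stable under finite sums and products: sums are immediate, and for a product $\Phi_1\Phi_2$ of elements of $\mathcal{K}$ one gets smoothness in $x$ with $x$-derivatives continuous in $(\epsilon,x)$ from the Leibniz rule, and for every $k,p$ the Leibniz rule for Malliavin derivatives and Hölder's inequality give $\|\Phi_1\Phi_2\|_{\mathbb{D}^{k,p}}\le C_{k,p}\sum\|\Phi_1\|_{\mathbb{D}^{k,p_1}}\|\Phi_2\|_{\mathbb{D}^{k,p_2}}$ for suitable exponents with $1/p_1+1/p_2=1/p$, so the supremum over $\epsilon$ stays finite; since $\beta^{J,\epsilon}(t,x)\in\mathcal{K}$ and $(M^\epsilon_{I,J}(x))^{-1}\in\mathcal{K}$, so is $\Psi_I(\epsilon,t,x)=\sum_{J\in\mathcal{A}_1(l)}\beta^{J,\epsilon}(t,x)(M^\epsilon_{I,J}(x))^{-1}$. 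The step I expect to be the real work is the uniform-in-$(\epsilon,t)$ control of the entire scale of $\mathbb{D}^{k,p}$-norms in claim (1): one has to check that every iterated Malliavin derivative of the coupled system remains an RDE with $C^\infty$-bounded coefficients whose bounds do not degenerate as $\epsilon\downarrow0$, and then invoke the moment estimates of \cite{CLL} with constants uniform in $\epsilon$; the remainder is bookkeeping with the Leibniz rule and Hölder's inequality.
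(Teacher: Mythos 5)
Your argument is correct and follows the same route as the paper, whose entire proof is the one-line assertion that the lemma is ``a direct consequence of Lemma \ref{Taylor expansion of beta} and Theorem \ref{M inverse bound}'': you use exactly those two inputs (uniform-in-$\epsilon$ moment/Malliavin-derivative bounds for the coupled RDE system containing $\beta^{J,\epsilon}$, and the uniform negative-moment bound on $M^{\epsilon}$), combined with the identity $\mathbf{D}(M^{\epsilon})^{-1}=-(M^{\epsilon})^{-1}(\mathbf{D}M^{\epsilon})(M^{\epsilon})^{-1}$, H\"older's inequality, and the stability of $\mathcal{K}$ under products. If anything, you supply more than the paper does: the uniform $\mathbb{D}^{k,p}$ control of $\beta^{J,\epsilon}$ genuinely requires the Cass--Litterer--Lyons integrability estimates applied to the Malliavin derivatives of the enlarged system (not just the bound on $\gamma^{\epsilon,J}_{I}$ from Lemma \ref{Taylor expansion of beta}), and you correctly identify that as the substantive step.
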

 
 \begin{proof} 
This is a direct consequence of Lemma \ref{Taylor expansion of beta} and Theorem \ref{M inverse bound}.
 \end{proof}

\begin{proposition}\label{by parts}
Let $\Phi(\epsilon,x)\in \mathcal{K}$, then for any $I\in \mathcal{A}_{1}(l)$ , there exists $T^{\ast}_{V^{\epsilon}_{[I]}}\Phi(\epsilon,x)\in \mathcal{K}$ such that 
\[
\mathbb{E}(\Phi(\epsilon,x)V^{\epsilon}_{[I]}f(X_{1}^{\epsilon, x}))=\mathbb{E}\left(f(X^{\epsilon, x}_{1})T_{V^{\epsilon}_{[I]}}^{\ast}\Phi(\epsilon,x)\right).
\]
\end{proposition}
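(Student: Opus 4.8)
The plan is to rewrite $V^\epsilon_{[I]}f(X^{\epsilon,x}_1)$ using the integration-by-parts identity proved just above, namely
\[
V^\epsilon_{[I]}f(X^{\epsilon,x}_1)=\sum_{J\in\mathcal{A}_1(l)}(M^\epsilon_{I,J}(x))^{-1}D^{(J)}f(X^{\epsilon,x}_1)
=\sum_{J\in\mathcal{A}_1(l)}(M^\epsilon_{I,J}(x))^{-1}\langle \mathbf{D}_\cdot f(X^{\epsilon,x}_1),\,\beta^{J,\epsilon}(\cdot,x)1_{[0,1]}(\cdot)\rangle_{\mathcal{H}}.
\]
Multiplying by $\Phi(\epsilon,x)$, taking expectations, and using that $(M^\epsilon_{I,J}(x))^{-1}$ is $\mathcal{H}$-independent of the integration variable, we get
\[
\mathbb{E}\big(\Phi(\epsilon,x)V^\epsilon_{[I]}f(X^{\epsilon,x}_1)\big)=\sum_{J\in\mathcal{A}_1(l)}\mathbb{E}\Big(\big\langle \mathbf{D}_\cdot f(X^{\epsilon,x}_1),\,\Phi(\epsilon,x)(M^\epsilon_{I,J}(x))^{-1}\beta^{J,\epsilon}(\cdot,x)1_{[0,1]}(\cdot)\big\rangle_{\mathcal{H}}\Big).
\]
Writing $\mathbf{D}_\cdot f(X^{\epsilon,x}_1)=\mathbf{D}_\cdot (f(X^{\epsilon,x}_1))$, this is of the form $\mathbb{E}(\langle \mathbf{D}(f\circ X^{\epsilon,x}_1),\,u\rangle_{\mathcal{H}})$ with $u$ the $\mathcal{H}$-valued process $u=\sum_J \Phi(\epsilon,x)(M^\epsilon_{I,J}(x))^{-1}\beta^{J,\epsilon}(\cdot,x)1_{[0,1]}(\cdot)$.

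Next I would invoke the duality formula for the divergence operator $\delta$: provided $u$ lies in the domain of $\delta$ (in particular $u\in\mathbb{D}^{1,p}(\mathcal{H})$ for some $p>1$, which is where the membership of $\beta^{J,\epsilon}$ and $(M^\epsilon_{I,J})^{-1}$ in $\mathcal{K}$ is used), we have $\mathbb{E}(\langle \mathbf{D}(f\circ X^{\epsilon,x}_1),u\rangle_{\mathcal{H}})=\mathbb{E}\big((f\circ X^{\epsilon,x}_1)\,\delta(u)\big)$. Thus one sets
\[
T^\ast_{V^\epsilon_{[I]}}\Phi(\epsilon,x):=\sum_{J\in\mathcal{A}_1(l)}\delta\Big(\Phi(\epsilon,x)(M^\epsilon_{I,J}(x))^{-1}\beta^{J,\epsilon}(\cdot,x)1_{[0,1]}(\cdot)\Big),
\]
and the identity $\mathbb{E}(\Phi V^\epsilon_{[I]}f(X^{\epsilon,x}_1))=\mathbb{E}(f(X^{\epsilon,x}_1)T^\ast_{V^\epsilon_{[I]}}\Phi)$ follows. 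It remains to verify that this candidate belongs to $\mathcal{K}$: smoothness and a.s. continuity in $(\epsilon,x)$ of $x$-derivatives come from the corresponding properties of $\Phi$, $(M^\epsilon_{I,J})^{-1}$, $\beta^{J,\epsilon}$ and the fact that $\delta$ commutes with $\partial_x$ (differentiating the linear/quadratic SDEs under the integral), while the uniform-in-$\epsilon$ Sobolev bounds $\sup_\epsilon\|\partial^\nu_x T^\ast_{V^\epsilon_{[I]}}\Phi\|_{\mathbb{D}^{k,p}}<\infty$ follow from continuity of $\delta:\mathbb{D}^{1,p}(\mathcal{H})\to L^p$ (Proposition 1.5.7 of \cite{Nu06}), applied iteratively, together with Hölder's inequality and the uniform estimates defining $\mathcal{K}$, for which the key inputs are Lemma \ref{Taylor expansion of beta} and Theorem \ref{M inverse bound}.

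The main obstacle I expect is the bookkeeping needed to show $u\in\mathbb{D}^{1,p}(\mathcal{H})$ with $\epsilon$-uniform norm, i.e. that the product $\Phi\cdot(M^\epsilon_{I,J})^{-1}\cdot\beta^{J,\epsilon}$ and all its Malliavin/space derivatives have $L^p$ norms bounded uniformly in $\epsilon\in(0,1]$. This is precisely the point of packaging everything into the class $\mathcal{K}$: $\mathcal{K}$ is stable under products (via Hölder) and under the action of $\delta$ (via the continuity of $\delta$ on $\mathbb{D}^{1,p}$ and the fact that $\mathcal{K}\subset\bigcap_{k,p}\mathbb{D}^{k,p}$ uniformly), so once one checks these two closure properties of $\mathcal{K}$ the result is immediate. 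I would therefore first record, as a short lemma or within this proof, that $\mathcal{K}$ is an algebra stable under $\delta$, and then the statement becomes a one-line application of the duality formula. A minor subtlety to handle carefully is that $\Phi(\epsilon,x)$, $(M^\epsilon_{I,J}(x))^{-1}$ and the $M$-inverse factor are $\mathcal{H}$-scalars not depending on the time variable of $\beta^{J,\epsilon}(\cdot,x)1_{[0,1]}(\cdot)$, so pulling them outside the $\langle\cdot,\cdot\rangle_{\mathcal{H}}$ pairing is legitimate; the nontrivial contribution to $\delta(u)$ then comes both from the $\beta$-term and from the Malliavin derivatives of the scalar prefactors, all of which are controlled by $\mathcal{K}$.
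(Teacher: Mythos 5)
Your proposal is correct and follows essentially the same route as the paper: apply the representation $V^{\epsilon}_{[I]}f(X^{\epsilon,x}_{1})=\sum_{J}(M^{\epsilon}_{I,J}(x))^{-1}D^{(J)}f(X^{\epsilon,x}_{1})$, pull the scalar factors into the $\mathcal{H}$-pairing, use the duality formula to define $T^{\ast}_{V^{\epsilon}_{[I]}}\Phi=\delta\bigl(\sum_{J}\beta^{J,\epsilon}(\cdot,x)(M^{\epsilon}_{I,J}(x))^{-1}\Phi(\epsilon,x)\bigr)$, and conclude membership in $\mathcal{K}$ via the continuity of $\delta$ on the Sobolev--Malliavin spaces together with H\"older's inequality. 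Your sum-of-divergences form agrees with the paper's divergence-of-the-sum by linearity of $\delta$, and your remark on the stability of $\mathcal{K}$ under products and under $\delta$ is exactly the content the paper packages into its final two displayed inequalities.
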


\begin{proof}
We have
\begin{align*}
 \mathbb{E}(\Phi(\epsilon,x)V_{[I]}f(X^{\epsilon, x}_{1}))=&\mathbb{E}\left(\Phi(\epsilon,x)\sum_{J\in \mathcal{A}_{1}(l)}
                                                      (M^{\epsilon}_{I,J}(x))^{-1}D^{(J)}f(X^{\epsilon, x}_{1})\right)\\
                             =&\mathbb{E}\left(\Phi(\epsilon,x)\sum_{J\in \mathcal{A}_{1}(l)}(M^{\epsilon}_{I,J}(x))^{-1}
                             \langle \mathbf{D}_{\cdot}f(X^{\epsilon, x}_{1}), \beta^{J,\epsilon}(\cdot,x)\rangle_{\mathcal{H}}\right)\\
                             =&\mathbb{E}\left(\langle \mathbf{D}_{\cdot}f(X^{\epsilon, x}_{1}), \sum_{J\in \mathcal{A}_{1}(l)}
                             \beta^{J,\epsilon}(\cdot,x)(M^{\epsilon}_{I,J}(x))^{-1}\Phi(\epsilon,x)\rangle_{\mathcal{H}}\right)\\
                             =&\mathbb{E}\left(f(X^{\epsilon, x}_{1})T^{\ast}_{V^{\epsilon}_{[I]}}\Phi(\epsilon,x)\right)
\end{align*}

where
\begin{align*}
T^{\ast}_{V^{\epsilon}_{[I]}}\Phi(\epsilon,x)=&\delta\left( \sum_{J\in \mathcal{A}_{1}(l)}\beta^{J,\epsilon}(t,x)
                                              (M^{\epsilon}_{I,J}(x))^{-1}\Phi(\epsilon,x)\right)\\
                                            =&\delta\left(\Psi_{I}(\epsilon,t,x)\Phi(\epsilon,x)\right).
\end{align*}

Then,  by using  the continuity of $\delta: \mathbb{D}^{k+1}\rightarrow \mathbb{D}^{k}$ and H\"older's inequality we have:
\begin{align*}
\|T^{\ast}_{V^{\epsilon}_{[I]}}\Phi(\epsilon,x)\|_{\mathbb{D}^{k,p}}\le& C_{k,p}\|\Psi_{I}(\epsilon,t,x)\Phi(\epsilon,x)\|_{\mathbb{D}^{k+1,p}}\\
                      \le& C_{k,p}\|\Psi_{I}(\epsilon,t,x)\|_{\mathbb{D}^{k+1,r}}\|\Phi(\epsilon,x)\|_{\mathbb{D}^{k+1,q}}
\end{align*}
where $\frac{1}{r}+\frac{1}{q}=\frac{1}{p}.$
\end{proof}

\section{Regularization bounds}

Now we are ready to state our main theorem. Consider the equation:
\begin{equation}
\label{eq:sde2} X^{x}_t =x +
\sum_{i=1}^d \int_0^t V_i (X^{x}_s) dB^i_s,
\end{equation}
where the vector fields $V_1,\ldots,V_d$ are $C^\infty$ bounded vector fields on $\R^n$ and where  $B$ is a fractional Brownian motion with parameter $H \in ( 1/4 , 1)$.

\begin{theorem}
Let $x\in \mathbb{R}^n$ and $p \ge 1$.  For any integer $k\ge 1$ and $I_{1}, \cdots, I_{k}\in \mathcal{A}_{1}(l)$, there exists a constant $C>0$ (depending on $x$) such that for every $C^\infty$ bounded function $f$,
 \[
  \\|V_{[I_{1}]}\cdots V_{[I_{k}]}P_{t}f(x)\\|\le Ct^{-(|I_1|+\cdots +|I_{k}|)H}(P_{t}f^{p}(x))^{\frac{1}{p}}, \quad t \in (0,1].
 \]
\end{theorem}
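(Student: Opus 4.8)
The plan is to combine the self-similarity of the fractional Brownian motion with an iteration of the integration by parts formula of Proposition~\ref{by parts}.

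\textbf{Step 1: rescaling.} For $\epsilon>0$ set $B^{\epsilon}_{t}:=\epsilon^{-H}B_{\epsilon t}$; a direct covariance computation shows $B^{\epsilon}$ is again a fractional Brownian motion with parameter $H$. If $Y_{t}:=X^{x}_{\epsilon t}$, changing time in \eqref{eq:sde2} shows that $Y$ solves the rescaled equation \eqref{scaled sde} driven by $B^{\epsilon}$ instead of $B$; since $B^{\epsilon}\stackrel{d}{=}B$ and the solution is a continuous image of the driving rough path, $X^{x}_{\epsilon}\stackrel{d}{=}X^{\epsilon,x}_{1}$, so with $P^{\epsilon}f(x):=\mathbb{E}(f(X^{\epsilon,x}_{1}))$ one gets $P_{\epsilon}f=P^{\epsilon}f$ for all $\epsilon\in(0,1]$. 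Because $V^{\epsilon}_{[I]}=\epsilon^{|I|H}V_{[I]}$ as vector fields on $\mathbb{R}^{n}$, the scalars $\epsilon^{|I_{j}|H}$ pull out of the composition, giving
\begin{equation}\label{main rescaling}
V_{[I_{1}]}\cdots V_{[I_{k}]}P_{t}f(x)=t^{-(|I_{1}|+\cdots+|I_{k}|)H}\,V^{t}_{[I_{1}]}\cdots V^{t}_{[I_{k}]}\,P^{t}f(x),\qquad t\in(0,1].
\end{equation}
Thus it suffices to show $\bigl|V^{\epsilon}_{[I_{1}]}\cdots V^{\epsilon}_{[I_{k}]}P^{\epsilon}f(x)\bigr|\le C\,(P^{\epsilon}|f|^{p}(x))^{1/p}$ uniformly in $\epsilon\in(0,1]$, with $C=C(x,I_{1},\dots,I_{k})$.

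\textbf{Step 2: the Malliavin weight stays in $\mathcal{K}$.} I would prove by induction on $k$ that there is $\Phi_{k}=\Phi_{k}(\epsilon,x)\in\mathcal{K}$ with
\[
V^{\epsilon}_{[I_{1}]}\cdots V^{\epsilon}_{[I_{k}]}P^{\epsilon}f(x)=\mathbb{E}\bigl(f(X^{\epsilon,x}_{1})\,\Phi_{k}(\epsilon,x)\bigr),
\]
starting from $\Phi_{0}\equiv 1\in\mathcal{K}$. For the inductive step, take $G(x)=\mathbb{E}(f(X^{\epsilon,x}_{1})\Phi(\epsilon,x))$ with $\Phi\in\mathcal{K}$; differentiating under the expectation (justified by the moment bounds for the Jacobian, Proposition~\ref{prop:moments-jacobian}, their higher order analogues, and the uniform $\mathbb{D}^{k,p}$ estimates built into $\mathcal{K}$) and applying the Leibniz rule gives
\[
V^{\epsilon}_{[I]}G(x)=\mathbb{E}\bigl(\Phi(\epsilon,x)\,V^{\epsilon}_{[I]}f(X^{\epsilon,x}_{1})\bigr)+\mathbb{E}\bigl(f(X^{\epsilon,x}_{1})\,V^{\epsilon}_{[I]}\Phi(\epsilon,x)\bigr),
\]
where $V^{\epsilon}_{[I]}f(X^{\epsilon,x}_{1})=\langle\nabla f(X^{\epsilon,x}_{1}),J^{\epsilon}_{0\to 1}V^{\epsilon}_{[I]}(x)\rangle$ is the directional derivative of $x\mapsto f(X^{\epsilon,x}_{1})$ along $V^{\epsilon}_{[I]}$, as in Section~4. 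By Proposition~\ref{by parts} the first term equals $\mathbb{E}(f(X^{\epsilon,x}_{1})\,T^{\ast}_{V^{\epsilon}_{[I]}}\Phi(\epsilon,x))$ with $T^{\ast}_{V^{\epsilon}_{[I]}}\Phi\in\mathcal{K}$; and $V^{\epsilon}_{[I]}\Phi=\epsilon^{|I|H}V_{[I]}\Phi$ with $V_{[I]}$ having $C^{\infty}_{b}$ coefficients and $\epsilon^{|I|H}\le 1$, so $\sup_{\epsilon\in(0,1]}\|\partial^{\nu}(V^{\epsilon}_{[I]}\Phi)\|_{\mathbb{D}^{k,p}}<\infty$ for all $k,p,\nu$, i.e. $V^{\epsilon}_{[I]}\Phi\in\mathcal{K}$. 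Hence $V^{\epsilon}_{[I]}G(x)=\mathbb{E}(f(X^{\epsilon,x}_{1})\Psi)$ with $\Psi=T^{\ast}_{V^{\epsilon}_{[I]}}\Phi+V^{\epsilon}_{[I]}\Phi\in\mathcal{K}$, closing the induction.

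\textbf{Step 3: conclusion.} By Step 2 and H\"older's inequality with $q=p/(p-1)$,
\[
\bigl|V^{\epsilon}_{[I_{1}]}\cdots V^{\epsilon}_{[I_{k}]}P^{\epsilon}f(x)\bigr|=\bigl|\mathbb{E}(f(X^{\epsilon,x}_{1})\Phi_{k})\bigr|\le\bigl(\mathbb{E}|f(X^{\epsilon,x}_{1})|^{p}\bigr)^{1/p}\bigl(\mathbb{E}|\Phi_{k}|^{q}\bigr)^{1/q}=(P^{\epsilon}|f|^{p}(x))^{1/p}\bigl(\mathbb{E}|\Phi_{k}|^{q}\bigr)^{1/q},
\]
and $\Phi_{k}\in\mathcal{K}$ gives $\sup_{\epsilon\in(0,1]}\mathbb{E}|\Phi_{k}(\epsilon,x)|^{q}=:C_{x}^{q}<\infty$. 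Inserting this into \eqref{main rescaling} with $\epsilon=t$ (and using $P^{t}|f|^{p}=P_{t}|f|^{p}$) yields the stated bound. The main obstacle is Step~2, and within it Proposition~\ref{by parts}: the point is that the Malliavin weights produced by repeatedly integrating by parts against the $V^{\epsilon}_{[I]}$ remain in $\mathcal{K}$, i.e. have $\mathbb{D}^{k,p}$ norms bounded \emph{uniformly in $\epsilon\in(0,1]$}. That uniformity is exactly what Theorem~\ref{M inverse bound} (uniform integrability of $(M^{\epsilon}_{I,J}(x))^{-1}$) and Lemma~\ref{Taylor expansion of beta} are designed to deliver, and it is what lets the rescaling identity \eqref{main rescaling} carry the sharp power of $t$ for free, avoiding any delicate small-time analysis of the estimates of Section~2. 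The self-similarity identity of Step~1 (a standard property of the fractional Brownian rough path) and the interchange of differentiation and expectation are routine by comparison.
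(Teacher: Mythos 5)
Your proposal is correct and follows essentially the same route as the paper: rescale via self-similarity so that $X^{x}_{t}\stackrel{d}{=}X^{t,x}_{1}$ and the powers $t^{|I_{j}|H}$ factor out of the $V^{t}_{[I_{j}]}$, then induct on $k$ using Proposition~\ref{by parts} and the Leibniz rule to produce a weight $\Phi_{k}\in\mathcal{K}$, and finish with H\"older's inequality together with the $\epsilon$-uniform moment bounds encoded in $\mathcal{K}$. Your write-up is in fact slightly more careful than the paper's (you justify the distributional identity, note explicitly that $V^{\epsilon}_{[I]}\Phi\in\mathcal{K}$, and correctly use $|f|^{p}$ in the H\"older step), but the argument is the same.
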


\begin{proof}
Let $\epsilon=t$. By the fact that $X^{x}_{\epsilon}$ has the same distribution as $X^{\epsilon, x}_{1}$, we have:
\begin{align*}
V_{[I_1]}\cdots V_{[I_{k}]}P_{t}f(x)=&V_{[I_1]}\cdots V_{[I_{k}]}\mathbb{E}(f(X^{x}_{t}))\\
                                    =&V_{[I_1]}\cdots V_{[I_{k}]}\mathbb{E}(f(X^{x}_{\epsilon}))\\
				     =&\epsilon^{-(|I_{1}|+\cdots |I|_{k})}V^{\epsilon}_{[I_1]}\cdots V^{\epsilon}_{[I_{k}]} \mathbb{E}(f(X^{\epsilon, x}_{1}))
\end{align*}

To prove the theorem, it is sufficient to show that there exists $\Phi(\epsilon, x)\in \mathcal{K}$ such that:
\begin{align}\label{by parts 2}
V^{\epsilon}_{[I_{1}]}\cdots V^{\epsilon}_{[I_{k}]}\mathbb{E}(f(X^{\epsilon, x}_{1}))= \mathbb{E}(f(X^{\epsilon, x}_{1})\Phi(\epsilon,x))
\end{align}
And the result follows by a simple application of H\"{o}lder's inequality. We prove the equation (\ref{by parts 2}) by induction. 
When $k=1$, by Proposition \ref{by parts}, there exists $T^{\ast}_{V^{\epsilon}_{[I_1]}}1(\epsilon,x)\in \mathcal{K}$. Now suppose the statement is true for $k=m$, then there exists 
$\Phi(\epsilon,x)\in \mathcal{K}$ and we have:

\begin{align*}
V^{\epsilon}_{[I_{m+1}]}V^{\epsilon}_{[I_{m}]}\cdots V^{\epsilon}_{[I_{1}]}\mathbb{E}(f(X^{\epsilon, x}_{1}))=&V^{\epsilon}_{[I_{m+1}]}\mathbb{E}(f(X^{\epsilon,x}_{1})\Phi(\epsilon,x))\\
                                                   =&\mathbb{E}\left(\Phi(\epsilon,x)V^{\epsilon}_{[I_{m+1}]}f(X^{\epsilon, x}_{1})
                                                   +f(X^{\epsilon, x}_{1})V^{\epsilon}_{[I_{m}]}\Phi(\epsilon,x)\right)\\
                                                   =&\mathbb{E}\left(f(X^{\epsilon, x}_{1})T^{\ast}_{V^{\epsilon}_{[I_{m+1}]}}
                                                   \Phi(\epsilon, x)+f(X^{\epsilon, x}_{1})V^{\epsilon}_{[I_{m+1}]}\Phi(\epsilon, x)\right)\\
                                                   =&\mathbb{E}\left(f(X^{\epsilon, x}_{1})\left(T^{\ast}_{V^{\epsilon}_{[I_{m+1}]}}\Phi(\epsilon,x)+
                                                   V^{\epsilon}_{[I_{m+1}]}\Phi(\epsilon,x)\right)\right).
\end{align*}
Since by induction hypothesis we know $\Phi(\epsilon, x)\in \mathcal{K}$. Now by Proposition \ref{by parts}, we have that $\left(T^{\ast}_{V^{\epsilon}_{[I_{m+1}]}}\Phi(\epsilon,x)+
                                                   V^{\epsilon}_{[I_{m+1}]}\Phi(\epsilon,x)\right)\in \mathcal{K}$  and this completes the proof
\end{proof}

As a straightforward corollary of the previous result, we finally deduce the following regularization result:

\begin{theorem}
 For any integer $k\ge 1$ and $I_{1}, \cdots, I_{k}\in \mathcal{A}_{1}(l)$, there exists a constant $C>0$ such that for every $C^\infty$ bounded function $f$,
 \[
  \\|V_{[I_{1}]}\cdots V_{[I_{k}]}P_{t}f(x)\\|\le Ct^{-(|I_1|+\cdots+| I_{k}|)H} \| f \|_\infty
 \]
 for any $t\in (0,1]$.
\end{theorem}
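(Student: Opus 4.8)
The plan is to derive this statement as an immediate corollary of the preceding theorem, using only the elementary fact that $P_t$ is a Markov operator, hence a contraction on bounded functions. Given a $C^\infty$-bounded function $f$, I would apply the preceding theorem with the exponent $p=2$: since $f$ is $C^\infty$-bounded, so is $f^2$, so the quantity $P_t f^2(x) = \mathbb{E}\big[f(X^x_t)^2\big]$ is well defined and nonnegative, and that theorem yields, for $t \in (0,1]$,
\[
\big\| V_{[I_1]}\cdots V_{[I_k]} P_t f(x)\big\| \le C\, t^{-(|I_1|+\cdots+|I_k|)H}\,\big(P_t f^2(x)\big)^{1/2},
\]
with $C>0$ the constant furnished by that theorem (in particular depending on $x$).

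It then remains to bound the right-hand side. Since $P_t g(x) = \mathbb{E}[g(X^x_t)]$ for bounded $g$, one has $|P_t g(x)| \le \|g\|_\infty$; applying this with $g = f^2$ gives $P_t f^2(x) \le \|f\|_\infty^2$, hence $\big(P_t f^2(x)\big)^{1/2} \le \|f\|_\infty$. Substituting this into the displayed inequality produces exactly
\[
\big\| V_{[I_1]}\cdots V_{[I_k]} P_t f(x)\big\| \le C\, t^{-(|I_1|+\cdots+|I_k|)H}\,\|f\|_\infty, \qquad t\in(0,1],
\]
which is the claim.

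I do not anticipate any real obstacle in this final step: all the substance of the argument lies in the preceding theorem and, beneath it, in Theorem \ref{M inverse bound} together with the integration-by-parts Proposition \ref{by parts}. The only point requiring a word of care is that the preceding theorem refers to the function $f^p$, which for non-integer $p$ need not be smooth when $f$ changes sign; choosing $p$ to be an even integer (here $p=2$, although any even integer works) removes this ambiguity, since then $f^p$ is automatically $C^\infty$-bounded and $P_t f^p(x)\ge 0$.
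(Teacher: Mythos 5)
Your proposal is correct and is exactly the intended derivation: the paper gives no explicit proof, calling the result ``a straightforward corollary of the previous result,'' and the obvious route is precisely yours --- apply the preceding theorem and bound $(P_t f^p(x))^{1/p} \le \|f\|_\infty$ using $|P_t g(x)| = |\mathbb{E}[g(X_t^x)]| \le \|g\|_\infty$. Your remark about choosing $p$ to be an even integer so that $f^p$ remains $C^\infty$-bounded when $f$ changes sign is a sensible clarification of a point the paper leaves implicit.
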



\begin{thebibliography}{99}
 
 \bibitem{BC}
F. Baudoin, L. Coutin:
Operators associated with a stochastic differential equation driven by fractional Brownian motions.
{\it Stoch. Proc. Appl.} {\bf 117} (2007), no. 5,  550--574.

\bibitem{BH} F. Baudoin, M. Hairer: 
A version of H\"ormander's theorem for the fractional Brownian motion. 
{\it Probab. Theory Related Fields} {\bf 139} (2007), no. 3-4, 373--395.


\bibitem{BO12}
F. Baudoin, C. Ouyang:
Gradient bounds for solutions of stochastic differential equations driven by fractional Brownian motions, Malliavin Calculus and Stochastic Analysis: A Festschrift in Honor of David Nualart, Springer Verlag (2012).






\bibitem{CF} T. Cass, P. Friz: Densities for rough differential equations under H\"{o}rmander condition. Annals of Mathematics, May 2010 issue (Volume 171, no. 3), 2115-2141.


\bibitem{CHLT} T. Cass, M. Hairer, C. Litterer, S. Tindel:
Smoothness of the density for solutions to Gaussian Rough Differential Equations.
\textit{Arxiv} Preprint (2012).


\bibitem{CLL} T. Cass, C. Litterer, T. Lyons:
Integrability estimates for Gaussian rough differential equations.
\textit{Arxiv} preprint (2011).



\bibitem{CQ}
L. Coutin, Z. Qian:
Stochastic analysis, rough path analysis and fractional Brownian motions.
{\it Probab. Theory Related Fields}  {\bf 122}  (2002),  no. 1, 108--140.









\bibitem{FV-bk}
P. Friz, N. Victoir:
\emph{Multidimensional stochastic processes seen as rough paths.}
Cambridge University Press (2010).


\bibitem{Ha}
M. Hairer: 
Ergodicity of stochastic differential equations driven by fractional Brownian motion.  
\textit{Ann. Probab.}  \textbf{33}  (2005),  no. 2, 703--758.



\bibitem{HP}
M. Hairer, N. S. Pillai: Regularity of laws and ergodicity of hypoelliptic SDEs driven by rough paths. \textit{Preprint} (2011). 


\bibitem{KS}
S. Kou, X. Sunney-Xie:
Generalized Langevin equation with
fractional Gaussian noise: subdiffusion within a single protein molecule.
{\it Phys. Rev. Lett.} {\bf 93}, no. 18 (2004).

\bibitem{KS1}S. Kusuoka,  D. Stroock,
Applications of the Malliavin calculus. I. Stochastic analysis (Katata/Kyoto, 1982), 271Ð306, 
North-Holland Math. Library, 32, North-Holland, Amsterdam, 1984. 

\bibitem{KS2}S. Kusuoka,  D. Stroock,
Applications of the Malliavin calculus. II. 
J. Fac. Sci. Univ. Tokyo Sect. IA Math. 32 (1985), no. 1, 1Ð76. 

\bibitem{KS3} S. Kusuoka,  D. Stroock,  Applications of the Malliavin calculus. III.  J. Fac. Sci. Univ. Tokyo Sect. IA Math. 34 (1987), no. 2, 391Ð442. 

\bibitem{Ku} S. Kusuoka: Malliavin calculus revisited. J. Math. Sci. Univ. Tokyo, 10 (2003), 261-277.


\bibitem{LS} 
W.Li, Q.Shao:  Gaussian Processes : Inequalities, Small Ball
Probabilities and Applications. Stochastic processes : Theory and methods, Handbook of Statistics, Vol. 19, Edited by C.R. Rao and D. Sha
nbhag, Elsevier, New York (2001), 533-598.

\bibitem{LQ}
T. Lyons, Z. Qian:
{\it System control and rough paths.}
Oxford University Press (2002).




 \bibitem{Nu06}
D. Nualart: \emph{The Malliavin Calculus and Related
Topics.} Probability and its Applications. Springer-Verlag, 2nd
Edition, (2006).

\bibitem{NR}
D. Nualart,  A. R\v{a}\c{s}canu:
Differential equations driven by fractional Brownian motion.
{\it Collect. Math.} {\bf 53} no. 1 (2002), 55-81.

\bibitem{NS}
D. Nualart, B. Saussereau:
Malliavin calculus for stochastic differential equations driven by a fractional  Brownian motion.  {\it Stochastic Process. Appl.}  {\bf 119}  (2009),  no. 2, 391--409. 


\bibitem{SW}
J. Szymanski, M. Weiss:
Elucidating the origin of anomalous diffusion in crowded fluids.
{\it Phys. Rev. Lett.} {\bf 103}, no. 3 (2009).

\bibitem{TBV}
V. Tejedor, O. Benichou, R. Voituriez, R. Jungmann, F. Simmel, C. Selhuber-Unkel, L. Oddershede and R. Metzle:
Quantitative Analysis of Single Particle Trajectories: Mean Maximal Excursion Method.
\emph{Biophysical J.} {\bf 98}, no. 7 (2010), 1364-1372.


\bibitem{Za}
M. Z\"ahle:
Integration with respect to fractal functions and stochastic  calculus I.
{\it Probab. Theory Relat. Fields} {\bf 111}  (1998), 333-374.

\end{thebibliography}
\end{document}